\newcommand{\code}[1]{\textnormal{\texttt{#1}}}
\newtheorem{thm}{Theorem}[section]
\newtheorem{lem}[thm]{Lemma}
\newtheorem{cor}[thm]{Corollary}
\newtheorem{prop}[thm]{Proposition}
\newtheorem{warn}[thm]{Warning}
\newtheorem{heur}[thm]{Heuristic}
\newtheorem{nota}[thm]{Notation}
\newtheorem{quest}[thm]{Question}
\theoremstyle{definition}
\newtheorem{defn}[thm]{Definition}
\newtheorem{ex}[thm]{Example}
\theoremstyle{remark}
\newtheorem{rmk}[thm]{Remark}
\newcommand{\isom}{\cong}
\newcommand{\ins}{\subset}
\newcommand{\Hom}{\operatorname{Hom}}
\newcommand{\Spec}{\operatorname{Spec}}
\newcommand{\Proj}{\operatorname{Proj}}
\newcommand{\triv}{\texttt{TRIV}}
\newcommand{\merge}{\texttt{MERGE}}
\newcommand{\wicsalg}{\texttt{WICS}}
\newcommand{\exps}{\operatorname{exps}}
\newcommand{\wics}{\operatorname{wics}}
\newcommand{\coeff}{\operatorname{coeff}}
\newcommand{\length}{\operatorname{length}}
\newcommand{\modp}{\; (\text{mod} \; p)}
\newcommand{\mathmod}{ \text{ mod }}
\title{K3 surfaces of any Artin--Mazur height over \(\mathbb{F}_{5}\) and 
    \(\mathbb{F}_{7}\) via 
quasi-\(F\)-split singularities and GPU acceleration}
\author{Ryan Batubara, Jack J. Garzella, Alex Pan}
\begin{document}

\maketitle

\begin{abstract}
    \noindent We develop a fast algorithm to calculate the Artin-Mazur height (equivalently, the
    quasi-\(F\)-split height) of a Calabi-Yau hypersurface, building on the work in
    \cite{kty-2022-fedder}. 
    We provide an implementation of our approach, and use it to show that there are quartic
    K3 surfaces of any Artin-Mazur height over \(\mathbb{F}_{5}\) and \(\mathbb{F}_7\).
\end{abstract}

\section{Introduction}

Given an algebraic variety \(X\) over a field of characteristic \(p\),
Artin and Mazur in \cite{artin-mazur-1977-height}
define the height \(h(X) \in \mathbb{N} \cup \{\infty\}\) 
(sometimes written \(h\) if \(X\) is obvious from the context)
to be the height 
of a certain formal group associated to \(X\).
The height is in some sense a measure of the
``arithmetic complexity'' of a variety,
with larger heights indicating ``more complexity''.
For example, consider the case when \(X\) is a 
K3 surface (i.e. a surface for which \(\omega_{X} \isom \mathcal{O}_{X} \) 
and \(H^{1}(X,\mathcal{O}_{X} ) = 0\)).
In this case, \(h\) determines the \textit{Newton polygon}
of \(X\), which in turn gives partial information about the 
point counts of \(X\) over finite fields,
i.e. \(\#X(\mathbb{F}_{q})\) for \(q = p^{r}, r \in \mathbb{N}\).
One may construct another polygon, the \textit{Hodge polygon},
from the Hodge-theoretic data associated to \(X\).
For K3 surfaces (and in fact for any surfaces), 
the Newton polygon equals the Hodge polygon
(i.e. the variety is ordinary) if and only if \(h=1\),
and the Newton polygon is \textit{supersingular} if and only if 
\(h = \infty\).

Given a field \(k\) of characteristic \(p\), one may ask

\begin{quest}
    For which values \(h\) does there exist a variety
    \(X\) such that \(h(X) = h\)?
\end{quest}

In the case that \(X\) is a K3 surface, it is known from the 
original work of Artin and Mazur
that either \(1 \leq h(X) \leq 10\) or \(h = \infty\).
One expects that all such $h$ are realized by K3 surfaces
in characteristic $p$.
For example, in \cite{artin-1974-k3-surfaces},
Artin shows\footnote{
    Artin showed this conditionally, dependent on flat
    duality for surfaces, which was later proved
    by Milne in \cite{milne-1976-flat-duality}.
}
that this holds over an algebraically
closed field \(k\) of characteristic \(p\) in the
case that \(p \equiv 3 \mathmod 4\).
In \cite{taelman-2016-k3-given-l-function}, 
Taelman shows that
all possible \(h\) are realized
over some sufficiently big finite field \(\mathbb{F}_{q}\). 
By base change, this implies the result for extensions
thereof (e.g. algebraically closed fields).
However, Taelman's argument is not constructive,
and there is no known bound on how big \(q\) must be to guarantee
existence of surfaces of any height.

More concretely, it follows from the work of
Kedlaya and Sutherland in 
\cite{kedlaya-sutherland-2016-census-k3-f2}
that there exist quartic K3 surfaces
(i.e. quartic surfaces in \(\mathbb{P}^{3}\))
of all possible heights over \(\mathbb{F}_{2}\).
More recently,
Kawakami, Takamatsu, and Yoshikawa in \cite{kty-2022-fedder}
have given examples of quartic K3 surfaces
of all possible heights over \(\mathbb{F}_{3}\).

Kawakami, Takamatsu, and Yoshikawa consider, 
instead of the Artin--Mazur height, another quantity called the
\textit{quasi-\(F\)-split height} 
(see Section 2 for a definition),
which is known to be equal to the Artin--Mazur height
for Calabi-Yau varieties, and thus K3 surfaces.
The theory of the quasi-\(F\)-split height has its
roots in the theory of \(F\)-singularites and commutative
algebra/birational geometry in characteristic \(p\).
In particular, one theorem fundamental
to the development of the theory of \(F\)-singularities
is Fedder's criterion (Theorem \ref{thm:fedder:criterion}), 
which gives a very concrete and computable way to check
whether or not a variety has height \(1\).
The main theorem of Kawakami, Takamatsu, and Yoshikawa
(\cite[Theorem~A]{kty-2022-fedder})
is a generalization of Fedder's criterion
to higher heights which gives a 
computable way to check whether a variety has height \(h\).
Moreover, they provide a simpler version
(\cite[Theorem~C]{kty-2022-fedder})
in the case when \(X\) is a Calabi-Yau hypersurface
(and thus also for quartic K3 surfaces).
The forthcoming work 
\cite{fgmqt-2025-witt-vectors-macaulay2} 
provides an implementation of Theorem A.
However, the algorithm used to compute the examples in
\cite{kty-2022-fedder}
is not especially practical
as it requires multiplying many large polynomials
(\cite{takamatsu-2024-algorithm}, 
\cite{fgmqt-2025-witt-vectors-macaulay2}),
limiting the computation to
very low characteristic.

In this work, we push the method of Kawakami,
Takamatsu, and Yoshikawa to the limits of readily
available hardware, 
and produce examples of quartic K3 surfaces \(X\) with all
possible heights \(h\) 
over \(\mathbb{F}_{5}\) and \(\mathbb{F}_{7}\).
Our main insight is that one of the main
bottlenecks of the problem can be broken down 
into many repeated matrix-vector
multiplications for a square matrix of size 
\(\binom{4p-1}{3}\).
Because of this, we can use Nvidia's CUBLAS library 
\cite{nvidia-2024-cublas}
to perform the matrix multiplications
on a GPU, making such computations
nearly instantaneous.
To create this matrix, one must calculate the matrix of a
certain linear operator.
We provide a few novel algorithms which accomplish this task,
including some that can be implemented on the GPU.
The remaining algorithm is bottlenecked by the operation
of raising a polynomial to a large power, 
so we implement this on the GPU as well using 
a Fast Fourier Transform (FFT) approach.
Our algorithms and implementations lead to a high throughput
of heights of K3 surfaces: 
about 1400 surfaces per second over \(\mathbb{F}_{5}\) and
about 180 surfaces per second over \(\mathbb{F}_{7}\).
Our implementations can also handle \(\mathbb{F}_{11}\) 
and \(\mathbb{F}_{13}\). 
For \(p = 11\), our method is comparable to the state of the art
for computing Newton polygons (e.g. 
\cite{chk-2019-toric-controlled-reduction}),
though it is noticably slower 
for \(p = 13\) (see section \ref{sec:heights:surfaces}).

All of our algorithms are implemented in Julia 
\cite{julia-2017}, and
make use of the OSCAR computer algebra system 
\cite{OSCAR-book}.
Code from this work is open source and available online 
in various
Julia packages: 
MMPSingularities.jl \cite{mmpsingularities-jl},
GPUFiniteFieldMatrices.jl \cite{gpuffmatrices-jl}, 
GPUPolynomials.jl \cite{gpupolynomials-jl},
and CudaNTTs.jl \cite{cudantts-jl}.

In Section 2, we give background on the quasi-\(F\)-split height
and Fedder's criterion.
In Section 3, we describe the naive implementation of
\cite[Theorem~C]{kty-2022-fedder} and describe
our modification.
In Section 4, we describe our algorithms for calculating the matrix
% We say "multiply then split" everywhere else
of the key linear operator which we term ``multiply then split''.
In Section 5, we describe our GPU implementation of the Number Theoretic
Transform, which is a finite field variant of the FFT.
In Section 6, we describe the considerations we need to keep in mind
to use CUBLAS.
In Section 7, we describe the computation of surfaces of all possible
heights over \(\mathbb{F}_{5}\) and \(\mathbb{F}_{7}\).

Throughout the paper, we do various timing experiments to compare 
approaches for each computational step.
All timing tests were performed with 
an Intel i5-8400 CPU and a Nvidia GeForce RTX 3070 GPU.

\subsection{Acknowledgements}

The authors thank the maintainers and support staff 
of the UCSD research cluster for use of their devices,
and Wilson Cheung for much helpful tech support.
Furthermore, they thank the authors of 
\cite{kty-2022-fedder} 
and \cite{fgmqt-2025-witt-vectors-macaulay2}
for providing preliminary implementations of 
Theorem C and Theorem A
of \cite{kty-2022-fedder}.
The second author thanks Jakub Witaszek, Kiran S. Kedlaya,
Steve Huang, and Karl Schwede for helpful discussions.
The authors also very grateful to 
Tommy Hofmann, Steve Huang, and 
Michael Monagan for giving feedback on an early draft of
this paper.

The second author was partially supported by the 
National Science Foundation Graduate Research
Fellowship Program under Grant No. 2038238, and a fellowship
from the Sloan Foundation.

\section{Preliminaries: the quasi-\(F\)-split height and Fedder's criterion}

\subsection{Witt vectors}

We begin by defining the ring of Witt vectors. 
Since the theory of Witt vectors is a vast and active topic of research,
we recall the bare minimum 
required for the quasi-\(F\)-split Fedder's criterion algorithm.
For a more complete introduction to the ring of Witt vectors with proofs, 
see \cite{rabinoff-2014-witt-vec}.
For an intuitive introduction or derivation of the ring of Witt vectors, see
\cite{kim-2017-witt-vec}.
There are many other perspectives on Witt vectors. 
For example:
\cite[Chapter~17]{hazewinkel-1978-formal-groups} covers the ring of Witt vectors
and its relationship with formal groups; 
\cite{kedlaya-2021-prismatic} gives a categorical perspective on the ring of Witt vectors
that relates to lifts of the Frobenius; and
\cite[Chapter~1]{schneider-2017-galois-rep-phi-gamma} defines a generalization
known as the ring of \textit{ramified Witt vectors} in detail.

\begin{defn}
	The \textit{\(n\)-th Witt Polynomial} \(\omega_{n}\) is defined as
	\[
		\omega_{n}(X_{0}, \ldots, X_{n}) = X_{0}^{p^{n}} + pX_{1}^{p^{n-1}} + \ldots + p^{n}X_{n}
        \in \mathbb{Z}[X_{0}, \ldots, X_{n}]
	\] 
    
\end{defn}

\noindent Now let \(R\) be a ring of characteristic \(p\).
We define the map \(\Phi\) to be the map 
% \[
% 	\prod_{n \in \mathbb{N}}^{} R 
% 	\xrightarrow{(\omega_{n})_{n}} 
% 	\prod_{n \in \mathbb{N}}^{} R  
% \] 
\begin{align*}
    \Phi : \prod_{n \in \mathbb{N}}^{R} R  
    &\xlongrightarrow{(\omega_{n})_{n}} 
    \prod_{n \in \mathbb{N}}^{R} R \\
    (r_{0}, r_{1}, \ldots, r_{n}, \ldots) 
    &\xmapsto{\hphantom{(\omega_{n})_{n}}}
    (\omega_{0}(r_{0}), \omega_{1}(r_{0}, r_{1}), 
    \ldots, \omega_{n}(r_{0}, \ldots, r_{n}), \ldots).
\end{align*}

defined as \(\omega_{n}\) for the \(n\)-th component.
That is, \(\omega_{n}\) is considered as a 
map (of sets) \(R^{n} \xrightarrow{} R\) by
evaluation.

\begin{lem}
	There exist integer polynomials 
	\(S_n(X_{0}, \ldots, X_{n}, Y_{0}, \ldots, Y_{n})\) 
	with the property that 
	\[
		\Phi((S_{n})_{n \in \mathbb{N}}) =
		\Phi((X_{n})_{n \in \mathbb{N}}) 
		+ \Phi((Y_{n})_{n \in \mathbb{N}})
	.\] 
	Likewise, there exist integer polynomials 
	\(P_{n}(X_{0}, \ldots, X_{n}, Y_{0}, \ldots, Y_{n})\)
	such that
	\[
		\Phi((P_{n})_{n \in \mathbb{N}}) =
		\Phi((X_{n})_{n \in \mathbb{N}}) 
		\cdot \Phi((Y_{n})_{n \in \mathbb{N}})
	.\] 
\end{lem}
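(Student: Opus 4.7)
The plan is to construct $S_n$ and $P_n$ recursively first over $\mathbb{Z}[1/p]$, and then show by induction on $n$ that they actually have integer coefficients. The formal existence is immediate from the triangular shape of the Witt polynomials: the system $\omega_n(Z_0, \ldots, Z_n) = w_n$ can be solved uniquely for $Z_n$ by
$$p^n Z_n \;=\; w_n - Z_0^{p^n} - pZ_1^{p^{n-1}} - \cdots - p^{n-1} Z_{n-1}^p.$$
Taking $w_n = \omega_n(X_0, \ldots, X_n) + \omega_n(Y_0, \ldots, Y_n)$ produces, inductively in $n$, polynomials $S_n \in \mathbb{Z}[1/p][X_0, \ldots, X_n, Y_0, \ldots, Y_n]$; taking $w_n = \omega_n(X)\,\omega_n(Y)$ produces the $P_n$ analogously. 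The base cases are $S_0 = X_0 + Y_0$ and $P_0 = X_0 Y_0$, and a direct calculation for $S_1$ shows the $1/p$ clears because $p \mid \binom{p}{k}$ for $1 \leq k \leq p-1$.

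The hard part is showing that the rational polynomials produced by the recursion are in fact integral in general. I would proceed by induction on $n$, leaning on the classical congruence lemma that if $f, g \in \mathbb{Z}[T_1, \ldots, T_m]$ satisfy $f \equiv g \pmod{p^k}$ then $f^p \equiv g^p \pmod{p^{k+1}}$; this follows by expanding $(g + p^k h)^p$ and noting $p \mid \binom{p}{j}$ for $1 \le j \le p-1$. Iterating yields the key congruence
$$\omega_n(Z_0, \ldots, Z_n) \;\equiv\; \omega_{n-1}(Z_0^p, \ldots, Z_{n-1}^p) \pmod{p^n}$$
for any integer polynomials $Z_i$. Applying this simultaneously to the sequences $(X_i)$, $(Y_i)$, and---using the inductive hypothesis---$(S_i)$, and feeding the result into the recursive formula for $S_n$, shows that the numerator is divisible by $p^n$, so $S_n$ is integral. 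The argument for $P_n$ runs identically, with the product replacing the sum on the right-hand side (here one uses that $\omega_n(X)\,\omega_n(Y) \equiv \omega_{n-1}(X_0^p, \ldots, X_{n-1}^p)\,\omega_{n-1}(Y_0^p, \ldots, Y_{n-1}^p) \pmod{p^n}$).

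This is Witt's original construction; the only genuinely nontrivial step is the integrality, since the formal existence over $\mathbb{Z}[1/p]$ is forced by the recursion. Detailed versions of this argument in essentially the form above appear in the already-cited \cite{rabinoff-2014-witt-vec}.
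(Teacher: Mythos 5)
The paper does not actually prove this lemma --- its ``proof'' is a pointer to \cite{rabinoff-2014-witt-vec}, Theorem~2.6. What you have written is precisely the standard argument that lives in that reference (and in Serre, Lang, etc.): solve the triangular system $\omega_n(Z_0,\ldots,Z_n)=w_n$ recursively over $\mathbb{Z}[1/p]$, then prove integrality by induction using the lifting-the-exponent congruence $f\equiv g \pmod{p^k}\Rightarrow f^p\equiv g^p\pmod{p^{k+1}}$. So your route is the expected one, just carried out rather than cited.

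One imprecision worth fixing before you call this a proof: the congruence you display as the ``key congruence,'' namely $\omega_n(Z_0,\ldots,Z_n)\equiv\omega_{n-1}(Z_0^p,\ldots,Z_{n-1}^p)\pmod{p^n}$, is actually the exact identity $\omega_n(Z)=\omega_{n-1}(Z_0^p,\ldots,Z_{n-1}^p)+p^nZ_n$ and carries no arithmetic content; it does not by itself show the numerator is divisible by $p^n$. The step that does the work is different: writing the numerator as $\omega_n(X)+\omega_n(Y)-\omega_{n-1}(S_0^p,\ldots,S_{n-1}^p)$, you use (i) the trivial identity to replace $\omega_n(X)+\omega_n(Y)$ by $\omega_{n-1}(X_0^p,\ldots,X_{n-1}^p)+\omega_{n-1}(Y_0^p,\ldots,Y_{n-1}^p)$ modulo $p^n$, (ii) the level-$(n-1)$ functional equation \emph{evaluated at the substituted variables} $(X_i^p,Y_i^p)$ to rewrite that sum as $\omega_{n-1}\bigl(S_0(X^p,Y^p),\ldots,S_{n-1}(X^p,Y^p)\bigr)$, and (iii) the Frobenius congruence $S_i(X,Y)^p\equiv S_i(X^p,Y^p)\pmod p$ together with your lifting lemma, iterated $n-1-i$ times and multiplied by $p^i$, to compare this with $\omega_{n-1}(S_0^p,\ldots,S_{n-1}^p)$ modulo $p^n$. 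You state the lifting lemma, so the tool is on the table, but step (ii) --- the substitution $X\mapsto X^p$ into the already-established functional equation --- and the comparison in (iii) are the genuinely nontrivial hinge of the induction and should be made explicit. With that repair the argument is complete and matches the cited source.
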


\begin{proof}
	For example, see \cite[Theorem~2.6]{rabinoff-2014-witt-vec} and the 
	surrounding discussion.
\end{proof}

We now define the ring of Witt vectors 
\(W(R)\) to be 
\(\prod_{n \in \mathbb{N}}^{} R \) 
as a set, with the ring structure defined
by 
\[
	(a_{n})_{n \in \mathbb{N}} + 
	(b_{n})_{n \in \mathbb{N}} =
	(S_{n}(a_{0}, \ldots, a_{n}, b_{0}, \ldots, b_{n}))_{n \in \mathbb{N}}
\] 
and likewise, multiplication is defined
using the \(P_{n}\).
The lemma then shows that \(\Phi\) is a homomorphism 
\[
	W(R) \xrightarrow{} \prod_{n \in \mathbb{N}}^{} R 
.\] 
The fact that the polynomials \(S_{n}\) and \(P_{n}\) do not depend on the base
ring \(R\) means that the construction is functorial; that is,
for a map of rings \(R \xrightarrow{} R^{\prime} \), we get a 
map \(W(R) \xrightarrow{} W(R^{\prime})\).

The main example, which also provides the fundamental motivation
for Witt vectors, is the case when \(R = \mathbb{F}_{p}\).
It is well known that \(W(\mathbb{F}_{p}) = \mathbb{Z}_{p}\), 
giving an alternative construction of the \(p\)-adic numbers.

\begin{warn}
	If one takes a
    naive \(p\)-adic expansion 
    \(\sum_{n = 0}^{\infty} c_{n}p^{n} \in \mathbb{Z}_{p}\)
    with \(c_{n} \in \{0, \ldots, p-1\}\), this does not correspond
    to the Witt vector \((c_{0}, c_{1}, c_{2}, \ldots)\).
    In fact, the aforementioned sum corresponds to 
    \((c_{0}, c_{1}^{p}, c_{2}^{p^{2}}, \ldots)\). See
	\cite[Section~2]{kim-2017-witt-vec}. 
    This motivates the following: 
\end{warn}

\begin{defn}
	There exists a homomorphism \(F \colon W(R) \xrightarrow{} W(R)\),
	called the \textit{Frobenius},
	defined by
	\[
	 (c_{0}, c_{1}, \ldots) \mapsto (c_{0}^{p}, c_{1}^{p}, \ldots)
	\] 
    (this is induced by the Frobenius on \(R\) by functoriality).
\end{defn}

\begin{defn}
	There exists a homomorphism \(V \colon W(R) \xrightarrow{} W(R)\),
	called the \textit{Verschiebung},
	defined by
	\[
		(c_{0}, c_{1}, c_{2}, \ldots) \mapsto  (0, c_{0}, c_{1}, \ldots)
	.\] 
\end{defn}

We see that \(W(R) / VW(R) \isom R\).
This is sometimes called the first \textit{truncated}
Witt vectors.
We also have higher truncated variants.

\begin{defn}
	The \(n\)-th truncated Witt vectors
	\(W_{n}(R)\) are defined as
	\(W(R) / V^{n}W(R)\).
\end{defn}

We may regard the \(n\)-th truncated Witt Vectors as consisting
of elements \((c_{0}, \ldots, c_{n-1})\) with addition
and multiplication by the \(S_{i}\) and \(P_{i}\).
Thus, one has \(\lim_{n} W_{n}(R) = W(R)\).

\begin{lem}
    \label{lem:frob:versch}
	The composition \(F \circ V = V \circ F\) is the multiplication by
	\(p\) map on \(W(R)\).
\end{lem}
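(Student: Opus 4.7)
The plan is to handle the two assertions separately. For $F \circ V = V \circ F$, direct computation on coordinates suffices: using the stated formulas, both compositions send $(c_0, c_1, \ldots)$ to $(0, c_0^p, c_1^p, \ldots)$, so this half is immediate.

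For the harder identity $V \circ F = p \cdot \mathrm{id}_{W(R)}$, I would use a ghost-component argument combined with a lift to characteristic zero, since the ghost map $\Phi$ is non-injective in characteristic $p$. Specifically, I plan to work in $W(\tilde R)$ where $\tilde R = \mathbb{Z}[x_0, x_1, \ldots]$, using the universal Witt vector Frobenius $\tilde F$ characterized by $\omega_n \circ \tilde F = \omega_{n+1}$ (which reduces mod $p$ to the coordinate-wise $p$-power map, a standard fact I would invoke). Set $\tilde{\mathbf c} = (x_0, x_1, \ldots)$ and $\tilde \epsilon \coloneqq V\tilde F(\tilde{\mathbf c}) - p\tilde{\mathbf c} \in W(\tilde R)$.

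Using the ghost identities $\omega_n \circ V = p \cdot \omega_{n-1}$ for $n \geq 1$ (and $\omega_0 \circ V = 0$), one computes that $\tilde \epsilon$ has ghost components $(-px_0, 0, 0, \ldots)$. Since $\tilde R$ is $\mathbb{Z}$-torsion-free, the ghost map is injective on $W(\tilde R)$, so these ghost components determine the Witt coordinates $\tilde \epsilon_n$ via the recursion
\[
\tilde \epsilon_0 = -px_0, \qquad p^n \tilde \epsilon_n = -\sum_{i=0}^{n-1} p^i \tilde \epsilon_i^{p^{n-i}} \quad (n \geq 1).
\]
A straightforward induction using the inequality $p^{n-i} \geq n-i+1$ then shows that every $\tilde \epsilon_n$ lies in $p\tilde R$. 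Consequently $\tilde \epsilon$ vanishes in $W(\tilde R / p) = W(\mathbb{F}_p[x_0, x_1, \ldots])$. Functoriality applied to the ring map $\mathbb{F}_p[x_0, x_1, \ldots] \to R$, $x_i \mapsto c_i$, then transfers the identity $VF = p \cdot \mathrm{id}$ to $W(R)$.

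The main obstacle is the divisibility induction --- it is elementary but requires tracking $p$-adic valuations carefully in the recursion, and critically uses torsion-freeness of $\tilde R$ so that the ghost coordinates determine the Witt coordinates uniquely. A secondary subtlety is the standard fact that the universal Frobenius reduces mod $p$ to the coordinate-wise $p$-power map, which I would cite rather than reprove.
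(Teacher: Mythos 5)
Your argument is correct, but it is a genuinely different route from the paper's: the paper does not prove this lemma at all, it simply cites \cite[Proposition~5]{kim-2017-witt-vec}. Your proof is the standard self-contained one via ghost components and a universal torsion-free lift, and all the key steps check out: the identity \(F\circ V=V\circ F\) is indeed immediate from the paper's coordinate-wise formulas for \(F\) and \(V\) (both composites send \((c_0,c_1,\ldots)\) to \((0,c_0^p,c_1^p,\ldots)\)); the ghost identities \(\omega_0\circ V=0\), \(\omega_n\circ V=p\,\omega_{n-1}\), and \(\omega_n\circ\tilde F=\omega_{n+1}\) give \(\Phi(\tilde\epsilon)=(-px_0,0,0,\ldots)\); and the divisibility induction works, since if each \(\tilde\epsilon_i\in p\tilde R\) for \(i<n\) then the term \(p^i\tilde\epsilon_i^{\,p^{n-i}}\) is divisible by \(p^{\,i+p^{n-i}}\) and \(i+p^{n-i}\ge i+(n-i+1)=n+1\), so torsion-freeness of \(\tilde R\) forces \(\tilde\epsilon_n\in p\tilde R\). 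Since the reduction \(W(\tilde R)\to W(\tilde R/p)\) is coordinate-wise, \(\tilde\epsilon\) dies there, and functoriality of \(W\), \(F\), \(V\), and multiplication by \(p\) along \(\mathbb F_p[x_0,x_1,\ldots]\to R\), \(x_i\mapsto c_i\), transfers the identity to every element of \(W(R)\). The one ingredient you import without proof --- that the universal Frobenius reduces mod \(p\) to the coordinate-wise \(p\)-power map --- is exactly the paper's own definition of \(F\) in characteristic \(p\) (``induced by the Frobenius on \(R\) by functoriality''), so the citation is harmless. What your approach buys is independence from the external reference at the cost of about a page of ghost-component bookkeeping; what the paper's approach buys is brevity, which is appropriate since the lemma is classical.
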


\begin{proof}
	\cite[Proposition~5]{kim-2017-witt-vec}
\end{proof}

\begin{lem}
    \label{lem:perfect:witt}
    Let \(R\) be a \textit{perfect} ring,
    that is, a ring for which \(F\) is an isomorphism.
    Then \(W_{n}(R) = W(R) / p^{n}W(R)\).
\end{lem}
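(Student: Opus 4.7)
The plan is to reduce the statement to a comparison of the ideals \(V^{n}W(R)\) and \(p^{n}W(R)\) inside \(W(R)\). By definition we have \(W_{n}(R) = W(R)/V^{n}W(R)\), so it suffices to prove the equality of ideals \(V^{n}W(R) = p^{n}W(R)\) whenever \(R\) is perfect.

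First I would handle the inclusion \(p^{n}W(R) \subseteq V^{n}W(R)\), which does not require perfectness. By Lemma \ref{lem:frob:versch}, \(p = FV = VF\) on \(W(R)\), and since \(F\) and \(V\) commute this gives \(p^{n} = V^{n}F^{n}\). Hence \(p^{n}W(R) = V^{n}F^{n}W(R) \subseteq V^{n}W(R)\).

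For the reverse inclusion \(V^{n}W(R) \subseteq p^{n}W(R)\), the key point is that perfectness of \(R\) forces the Frobenius \(F\) on \(W(R)\) to be an isomorphism. Indeed, by its defining formula \(F(c_{0}, c_{1}, \ldots) = (c_{0}^{p}, c_{1}^{p}, \ldots)\), the map \(F\) acts componentwise by the Frobenius on \(R\); if the latter is bijective, then so is \(F\) on \(W(R)\), with inverse given componentwise by \(c \mapsto c^{1/p}\). In particular \(F^{n}\) is surjective on \(W(R)\), so \(F^{n}W(R) = W(R)\) and therefore
\[
V^{n}W(R) = V^{n}F^{n}W(R) = p^{n}W(R),
\]
using again \(V^{n}F^{n} = p^{n}\). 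Combining the two inclusions gives \(W_{n}(R) = W(R)/V^{n}W(R) = W(R)/p^{n}W(R)\), as desired.

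The only real subtlety is the justification that a componentwise bijection is a ring-theoretic isomorphism of \(W(R)\); this is immediate, since the Witt addition and multiplication polynomials are applied componentwise and functorially in \(R\), so the inverse map \((c_{n})_{n} \mapsto (c_{n}^{1/p})_{n}\) is automatically a ring homomorphism via functoriality applied to the inverse Frobenius on \(R\). Everything else is a direct manipulation using Lemma \ref{lem:frob:versch}.
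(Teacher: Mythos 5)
Your argument is correct and is essentially the paper's own proof, just written out in full: the paper simply says the result follows from Lemma \ref{lem:frob:versch} together with \(F\) being an isomorphism, which is exactly the identity \(V^{n}W(R) = V^{n}F^{n}W(R) = p^{n}W(R)\) that you spell out. No discrepancy to report.
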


\begin{proof}
    Since \(F\) is an isomorphism, this follows from 
    Lemma \ref{lem:frob:versch}.
\end{proof}

\begin{rmk}
    If we take \(R = \mathbb{F}_{p}\), then
    Lemma \ref{lem:perfect:witt}
    shows that \(W_{n}(\mathbb{F}_{p}) = \mathbb{Z} / p^{n}\mathbb{Z}\).
    One common intuition for \(W_{n}(R)\)
    for more general \(R\) is an analogy:
    \(\mathbb{Z} / p^{n}\mathbb{Z}\) 
    is to \(\mathbb{F}_{p}\) 
    as \(W_{n}(R)\) is to \(R\).
    For rings that are not perfect,
    this intuition is less precise but still
    somewhat useful.
\end{rmk}

\begin{nota}
    We have a map (of sets) 
    \(R \xrightarrow{} W(R)\) 
    given by 
    \(c \mapsto (c, 0, \ldots)\).
    The element \((c, 0, \ldots)\) is denoted
    \([c]\).
    The map \(c \mapsto [c]\) is multiplicative
    but not additive (see \cite[Section~1]{kim-2017-witt-vec}).
    We have analogous maps \(R \xrightarrow{} W_{n}(R)\).
    By abuse of notation, we denote 
    the image of \(c\) by \([c]\).
\end{nota}

\begin{rmk}
	\label{rmk:polyraise:w2}
    Our computations will end up primarily involving
    \(W_{2}(R)\)\footnote{
    This comes from the delta formula,
    \cite[Theorem~D]{kty-2022-fedder}},
    where addition is governed by the polynomials
	\(S_{0}(X_{0}, Y_{0}) = X_{0} + Y_{0}\)
	and
	\[
		S_{1}(X_{0}, X_{1}, Y_{0}, Y_{1})
		= X_{1} + Y_{1} + 
		\frac{(X_{0} + Y_{0})^{p} - X_{0}^{p} - Y_{0}^{p}}{p}
	.\] 
	Thus we see that addition in \(W_{2}(R)\) involves raising 
	elements in \(R\) (i.e. the first component) to the \(p\)-th
	power in a lift of \(R\) to characteristic 0.
    Thus, the bottleneck for most computations with Witt vectors,
    including the present work, tends to be raising
    integer polynonmials to powers.
\end{rmk}

\subsection{Splittings of Frobenius}
\label{subsec:split:frob}

Let \(R\) be a ring of characteristic \(p\). 
We have the Frobenius morphism 
\(F \colon R \xrightarrow{} R\), 
defined as \(F(x) = x^p\).
We describe a few alternative perspectives on
the Frobenius which will be useful later.

\begin{rmk}
	\label{rmk:frob:perspectives}
    \hfill
    \begin{enumerate}[(1)]
    	\item Let \(R\) be reduced. 
    		Then we may view the Frobenius as the inclusion
    		\(R \ins R^{1 / p}\), where \(R^{1 / p}\) 
    		is the ring of formal \(p\)-th roots of elements
    		of \(R\).
    	\item Similarly, again assuming that \(R\) is reduced
    		we may view the Frobenius as the inclusion
    		\(R^{p} \ins R\).
    	\item More generally, we define \(F_{\star}R\) to
			be the \(R\)-algebra with ring structure 
			the same as \(R\), with module structure
			\(r \cdot x = F(r)x = r^{p}x\).
			Then we view the Frobenius as a map
			\(R \xrightarrow{} F_{\star}R\).
			In this description, \(F\) is an \(R\)-module 
			homomorphism as well. 
			The module \(F_{\star}R\) corresponds to the
			pushforward construction from geometry
			(i.e. pushforward of quasi-coherent
			sheaves on \(\Spec R\)).
			Even more generally, for any \(R\)-module
			\(M\) we denote \(F_{\star}M\) to be
			the analogously defined pushforward by
			Frobenius.
    \end{enumerate}
\end{rmk}

\begin{defn}
	We say that \(R\) is \textit{\(F\)-split} if the map \(F\) is
	split as a map of 
	\(R\)-modules \(R \xrightarrow{} F_{\star}R\).
\end{defn}

We will be chiefly concerned with \textit{hypersurfaces},
so we specialize to this case now.
For what follows, we assume that \(k\) is a field
of characteristic \(p > 0\) which is 
\textit{\(F\)-finite}; that is, the Frobenius
map is module-finite.

\begin{defn}
	Let \(S = k[x_{1}, \ldots, x_{n}]\).
	Then we say that \(f \in S\) is \(F\)-split if 
	\(S / (f)\) is.
\end{defn}

A fundamental fact about \(F\)-splitness is that there
exists a very concrete criterion for whether or 
not a polynomial (hypersurface) \(f\) is \(F\)-split.
First, we introduce some notation.
If \(I = (x_{1}, \ldots, x_{n})\) is a finitely generated 
ideal of some ring \(R\), 
then \(I^{[m]}\)
is defined to be \((x_{1}^{m}, \ldots, x_{n}^{m})\).

\begin{thm}
	[Fedder's Criterion]
    \label{thm:fedder:criterion}
	Let \(f \in S = k[x_{1}, \ldots, x_{n}]\).
	Let \(\mathfrak{m} = (x_{1}, \ldots, x_{n})\) 
	be the ideal generated by the variables.
	Then \(f\) is \(F\)-split if and only if 
	\(f^{p-1} \notin \mathfrak{m}^{[p]}\).
\end{thm}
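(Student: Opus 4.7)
The plan is to exploit that $S$ is regular so $F_{\star}S$ is free over $S$ of rank $p^{n}$, with basis the monomials $\{x^{a} : 0 \le a_{i} < p\}$, and moreover $\Hom_{S}(F_{\star}S, S)$ is free of rank one as an $F_{\star}S$-module.  I would fix the standard generator $\Phi_{0} \in \Hom_{S}(F_{\star}S, S)$ defined on the basis by sending $x_{1}^{p-1}\cdots x_{n}^{p-1}$ to $1$ and the other basis monomials to $0$; every $\phi \in \Hom_{S}(F_{\star}S, S)$ then has the form $\phi(m) = \Phi_{0}(gm)$ for a unique $g \in F_{\star}S$.

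The key lemma I would prove is: for any ideal $I \subset S$ and any $u \in S$, one has $\Phi_{0}(u \cdot F_{\star}S) \subset I$ if and only if $u \in I^{[p]}$.  The ``if'' direction is immediate from $S$-linearity (write $u = \sum h_{j} y_{j}^{p}$ with $y_{j} \in I$ and pull each $y_{j}$ out of $\Phi_{0}$).  For ``only if'', expand $u = \sum_{a} s_{a}^{p} x^{a}$ in the free basis and compute $\Phi_{0}(u \cdot x^{(p-1,\ldots,p-1) - a})$ for each multi-index $a$ with $0 \le a_{i} \le p-1$: this extracts exactly $s_{a}$, which must therefore lie in $I$, forcing $u \in I^{[p]}$.

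Next I would reduce Fedder to this lemma.  Since the Frobenius $R \to F_{\star}R$ is $R$-linear, $F$-splitness of $R = S/(f)$ is equivalent to surjectivity of the evaluation map $\mathrm{ev} \colon \Hom_{R}(F_{\star}R, R) \to R$, $\sigma \mapsto \sigma(1)$.  Using the presentation $F_{\star}R = F_{\star}S / F_{\star}(f)$, the requirement that $\phi(m) := \Phi_{0}(gm)$ descend to an $R$-linear map $F_{\star}R \to R$ becomes $\Phi_{0}(gf \cdot F_{\star}S) \subset (f)$; by the lemma with $I = (f)$ this says $gf \in (f^{p})$, equivalently $g \in (f^{p-1})$ since $f$ is a non-zerodivisor.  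Similarly, the maps landing entirely in $(f)$ correspond to $g \in (f^{p})$, so $\Hom_{R}(F_{\star}R, R) \isom (f^{p-1})/(f^{p})$, and under this identification $\mathrm{ev}$ sends $h f^{p-1}$ to $\Phi_{0}(h f^{p-1}) \bmod (f)$.

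Finally, after localizing at $\mathfrak{m}$ the map $\mathrm{ev}$ is surjective if and only if its image (an ideal of $R$) is not contained in the maximal ideal, i.e.\ if and only if there exists $h \in S$ with $\Phi_{0}(h f^{p-1}) \notin \mathfrak{m}$.  Applying the contrapositive of the lemma to $u = f^{p-1}$ and $I = \mathfrak{m}$, such an $h$ exists if and only if $f^{p-1} \notin \mathfrak{m}^{[p]}$, which is Fedder's criterion.  I expect the main obstacle to be the combinatorial lemma: tracking the $F_{\star}S$-basis expansions of products, and the ``$p$-th root'' bookkeeping in characteristic $p$, is where the real content lies, whereas the duality setup and the final passage through the local ring are formal.
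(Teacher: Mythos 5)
The paper does not prove this theorem itself but simply cites \cite[Theorem~2.5]{ma-polstra-2021-F-sing-comm-alg}, and your argument is precisely the standard proof given there: the rank-one $F_{\star}S$-module generator $\Phi_{0}$ of $\Hom_{S}(F_{\star}S,S)$, the key lemma $\Phi_{0}(u\cdot F_{\star}S)\subseteq I \Leftrightarrow u\in I^{[p]}$, and the resulting identification $\Hom_{R}(F_{\star}R,R)\cong (f^{p-1})/(f^{p})$, so your route is essentially the same as the cited one and is correct. The only detail to watch is that your monomial basis for $F_{\star}S$ over $S$ (and hence the definition of $\Phi_{0}$) tacitly assumes $k$ perfect, whereas the paper only assumes $k$ is $F$-finite; for general $F$-finite $k$ one augments the basis by a $p$-basis of $k$ over $k^{p}$, which changes nothing essential.
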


\begin{proof}
	See \cite[Theorem~2.5]{ma-polstra-2021-F-sing-comm-alg}.
\end{proof}

Of particular interest is the case when \(f\) is 
homogeneous of degree \(n\), which geometrically
corresponds to a \textit{Calabi-Yau} hypersurface.
In this case, we have

\begin{thm}
	\label{thm:fsplit:ordinary}
	If \(f \in S\) 
	homogeneous of degree \(n\) 
	is \(F\)-split,
	then the Artin--Mazur height 
	of \(Z(f) = \Proj (S / (f))\) 
	is 1. That is, \(Z(f)\) is weakly ordinary.
\end{thm}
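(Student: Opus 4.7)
The plan is to link the $F$-splitness of $f$ to the nonvanishing of the Frobenius action on the top coherent cohomology of $X = Z(f)$, which by the Artin--Mazur theory characterizes height $1$ for Calabi--Yau varieties.

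First, I would verify that $X$ is Calabi--Yau in the appropriate sense. Since $f$ is homogeneous of degree $n$ in $n$ variables, adjunction on $\mathbb{P}^{n-1}$ (whose canonical sheaf is $\mathcal{O}(-n)$) yields $\omega_{X} \isom \mathcal{O}_{X}$. Setting $d := \dim X = n-2$, Serre duality then gives $\dim_{k} H^{d}(X, \mathcal{O}_{X}) = 1$.

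Second, I would invoke the classical characterization, due to Artin and Mazur \cite{artin-mazur-1977-height}, that for a Calabi--Yau variety $X$ the height $h(X)$ equals $1$ precisely when the Frobenius-semilinear map
\[
H^{d}(F) \colon H^{d}(X, \mathcal{O}_{X}) \longrightarrow H^{d}(X, F_{\star}\mathcal{O}_{X}) \isom H^{d}(X, \mathcal{O}_{X})
\]
induced by $F \colon \mathcal{O}_{X} \to F_{\star}\mathcal{O}_{X}$ is nonzero. Since $H^{d}(X, \mathcal{O}_{X})$ is one-dimensional, "nonzero" here is the same as "bijective," which is the usual Hasse--Witt / weak-ordinariness condition.

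Third, I would unpack $F$-splitness cohomologically. Although $F$-splitness is defined for the graded ring $R = S/(f)$, by extracting the degree-zero piece of the splitting with respect to the natural $\mathbb{Z}$-grading one obtains an $\mathcal{O}_{X}$-linear splitting $\sigma \colon F_{\star}\mathcal{O}_{X} \to \mathcal{O}_{X}$ of the Frobenius on $X$ itself. Applying $H^{d}(X, -)$ and using that $F$ is affine (so $H^{d}(X, F_{\star}\mathcal{O}_{X}) \isom H^{d}(X, \mathcal{O}_{X})$), the identity $\sigma \circ F = \mathrm{id}_{\mathcal{O}_{X}}$ shows that $H^{d}(F)$ admits a retraction and in particular is nonzero.

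Combining the second and third steps concludes $h(X) = 1$. The main potential obstacle is the second step: formally identifying the Artin--Mazur height with the Hasse--Witt operator on top cohomology. Rather than reprove this, I would simply cite it from \cite{artin-mazur-1977-height}, treating it as the fundamental input that makes such criteria effective. A secondary minor point is the passage from a splitting of $R = S/(f)$ to a splitting of $\mathcal{O}_{X}$ on $X = \Proj R$, which is the standard graded-to-projective comparison and requires only that one chooses the appropriate graded component of $\sigma$.
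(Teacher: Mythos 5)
Your argument is correct, but it is worth noting that the paper does not actually prove this statement: Theorem \ref{thm:fsplit:ordinary} is stated without proof, and is in effect subsumed by the subsequent theorem identifying the Artin--Mazur height with the quasi-$F$-split height, which the paper attributes to \cite[Theorem~4.5]{yobuko-2019-qfs-calabi-yau} (the $F$-split case being exactly quasi-$F$-split height $1$). Your route is the standard direct one: adjunction gives $\omega_{X} \isom \mathcal{O}_{X}$ and $\dim_k H^{d}(X,\mathcal{O}_X)=1$ for $d=n-2$; the Artin--Mazur criterion reduces height $1$ to bijectivity of the ($p$-linear) Frobenius on this one-dimensional space; and a splitting of $F$ forces $H^{d}(F)$ to be (split) injective, hence nonzero, hence bijective. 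This buys a self-contained proof of the height-$1$ case without invoking Yobuko's machinery, at the cost of having to justify two inputs you correctly flag: the identification of height $1$ with the Hasse--Witt condition (which is where the real content of Artin--Mazur theory enters, and also tacitly requires $X$ smooth so that the formal group is pro-representable), and the descent of a graded splitting of $R=S/(f)$ to a splitting of $F_{\star}\mathcal{O}_X \to \mathcal{O}_X$ on $\Proj R$, which works because one may always replace a splitting by its degree-preserving graded piece (with the $\tfrac{1}{p}\mathbb{Z}$-grading convention on $F_{\star}R$) before sheafifying. Neither point is a gap, but both deserve an explicit citation or a line of justification in a written-up version.
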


Recently, in \cite{yobuko-2019-qfs-calabi-yau}
Yobuko introduced the notion of 
quasi-\(F\)-splitness, which generalizes \(F\)-splitness.

\begin{defn}
	The ring \(R\) is \textit{\(n\)-quasi-\(F\)-split} if there exists
	a map \(\phi \colon W_{n}(R) \xrightarrow{} R\) such that
	\[
	\begin{tikzcd}
		W_{n}(R) \arrow{r}{F} \arrow{d}[swap]{} &
		F_{\star}W_{n}(R) \arrow{ld}{\phi} \\
	R 
	\end{tikzcd}
	,\]
	where the vertical map is the first Witt vector truncation.
\end{defn}

We further define the \textit{quasi-\(F\)-split height} of \(R\) as the smallest \(n\) 
for which \(R\) is \(n\)-quasi-\(F\)-split.
As above, the quasi-\(F\)-split height of 
\(f \in S = k[x_{1}, \ldots, x_{n}]\) is that
of \(R = S / (f)\).

\begin{rmk}
	Both \(F\)-splitness and quasi-\(F\)-splitness have 
	various geometric
	variants which are more general then the 
	ring-theoretic/affine versions given here. 
	These are covered extensively in the literature, for example
	see \cite{kttwyy-2022-qfs-birat}.
\end{rmk}

The quasi-\(F\)-split height also gives a generalization
of Theorem \ref{thm:fsplit:ordinary}:

\begin{thm}
	If \(f \in S\) is homogeneous of degree \(n\),
	then the Artin--Mazur height of \(Z(f)\)
	is equal to the quasi-\(F\)-split height
	of \(f\).
\end{thm}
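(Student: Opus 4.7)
The plan is to reinterpret both heights as the same cohomological invariant of Frobenius on the top Witt vector cohomology of $X = Z(f) \ins \mathbb{P}^{n-1}$, a smooth Calabi--Yau hypersurface of dimension $d = n-2$. The natural target for both sides is the tower of modules $H^{d}(X, W_{m}\mathcal{O}_{X})$, equipped with its Frobenius $F$, Verschiebung $V$, and truncation maps as $m$ varies.

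First, for the Artin--Mazur height, I would invoke the classical structure theorem that for a proper smooth Calabi--Yau variety the Artin--Mazur formal group $\Phi^{d}_{X}$ is pro-representable by a $1$-dimensional commutative formal group whose covariant Dieudonn\'e module is $H^{d}(X, W\mathcal{O}_{X})$ with its natural $F$ and $V$. The height of this formal group can then be read off from the truncations: it is the smallest $m \geq 1$ for which $F$ does not vanish modulo $V$ on $H^{d}(X, W_{m}\mathcal{O}_{X})$, or $\infty$ if no such $m$ exists.

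Second, for the quasi-$F$-split height, I would convert the affine splitting condition defining $m$-quasi-$F$-splitness into a cohomological condition on $X$. The defining diagram of $\phi$ sheafifies on $X$ to a diagram of $W_{m}\mathcal{O}_{X}$-modules; applying Grothendieck duality, which is clean here because the Calabi--Yau assumption gives $\omega_{X} \isom \mathcal{O}_{X}$ and the homogeneity of $f$ makes $X$ projective, the existence of a retraction $\phi$ becomes equivalent to the existence of a section of $F \colon H^{d}(X, W_{m}\mathcal{O}_{X}) \to H^{d}(X, F_{\star}W_{m}\mathcal{O}_{X})$ compatible with the truncation to $H^{d}(X, \mathcal{O}_{X})$. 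A direct calculation then shows this lifting condition holds exactly when $F$ is non-nilpotent modulo $V$ at level $m$, which matches the characterization of the Artin--Mazur height above.

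The main obstacle is the duality step: one must carefully convert the affine splitting on $W_{m}(R)$ into the global cohomological statement on $X$, tracking compatibility with both Frobenius and the truncation filtration, and verifying that the auxiliary Ext-obstructions vanish in the Calabi--Yau setting. Once this dictionary is in place the two heights are visibly the same arithmetic invariant of the $F$-crystal on $H^{d}(X, W\mathcal{O}_{X})$, and the theorem follows.
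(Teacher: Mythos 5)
The paper does not actually prove this statement: its ``proof'' is a one-line citation to Theorem~4.5 of Yobuko's paper on quasi-$F$-split Calabi--Yau varieties. What you have written is, in outline, a reconstruction of the argument in that reference, so in substance you are following the same route the paper relies on rather than a genuinely different one. Your two pillars are both the right ones: (i) pro-representability of the Artin--Mazur functor $\Phi^{d}_{X}$ (which needs the vanishing $H^{i}(X,\mathcal{O}_{X})=0$ for $0<i<d$, true for Calabi--Yau hypersurfaces) together with the Artin--Mazur identification of its Dieudonn\'e module with $H^{d}(X,W\mathcal{O}_{X})$, from which the height is read off as the smallest $m$ for which $F$ acts nontrivially on the truncation $H^{d}(X,W_{m}\mathcal{O}_{X})\cong M/V^{m}M$; and (ii) a duality argument converting the existence of a splitting $\phi$ with $\phi\circ F=R^{m-1}$ into a statement about $F$ on $H^{d}(X,W_{m}\mathcal{O}_{X})$. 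The one place where your sketch is genuinely thin is the step you yourself flag as the main obstacle: the duality you need is not ordinary Grothendieck duality on $X$ but Serre duality for coherent $W_{m}\mathcal{O}_{X}$-modules on the thickened space $(X,W_{m}\mathcal{O}_{X})$ (Ekedahl/Tanaka), including the identification of the relevant dualizing object with $W_{m}\omega_{X}$ and its compatibility with $F$ and the restriction maps; this is the actual technical content of the cited theorem, and without it the ``direct calculation'' in your third paragraph cannot be carried out. If you intend this as a self-contained proof you must supply that duality; if you intend it as a reduction to known results, it is an accurate roadmap of the reference the paper quotes.
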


\begin{proof}
	This is a special case of 
	\cite[Theorem~4.5]{yobuko-2019-qfs-calabi-yau}.
\end{proof}

\section{Fedder's criterion for quasi-\(F\)-splitness: an algorithmic perspective}

We now describe how the ring of Witt vectors can be used to calculate the 
quasi-\(F\)-split height (equivalently, Artin--Mazur height) of a Calabi-Yau 
hypersurface, using the version of Fedder's criterion in \cite{kty-2022-fedder}.
We will describe the algorithm in more detail. Proofs can be
found in \cite{kty-2022-fedder}.

\subsection{The computation of \(\Delta_{1}\)}

For the following discussion, 
let \(k\) be a perfect field of characteristic \(p\) and 
let \(S := k[x_{1}, \ldots, x_{n}]\).
Let \(f = \sum_{I}^{} a_{I}\mathbf{x}^{I}\) be a polynomial in \(S\).

\begin{defn}
	Let \(\Delta_{1}(f)\)
	be defined by the following equation in \(W_{2}(S)\):
	\[
		(0, \Delta_{1}(f)) = (f,0) - \sum_{I}^{} (a_{I}\mathbf{x}^{I}, 0) 
	.\] 
\end{defn}

% The following proposition demonstrates how we can calculate 
% \(\Delta_{1}(f)\) algorithmically.

\begin{prop}
	\label{prop:delta1:formula}
	Let \(\tilde{f}\) be a lift of \(f\) to \(W(k)\),
	i.e.
	\(\tilde{f} = \sum_{I}^{} [a_{I}] \mathbf{x}^{I}\).
	If \(k = \mathbb{F}_{p}\),
	we can compute \(\Delta_{1}(f)\) by 
	taking the reduction of
	\[
		\frac{\tilde{f}^{p} - \sum_{I}^{} ([a_{I}]x^{I})^{p} }{p}
	\] 
	mod \(p\).
\end{prop}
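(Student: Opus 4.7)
The plan is to lift the whole computation to the $p$-torsion-free ring $\tilde{S} := W(\mathbb{F}_p)[x_1, \ldots, x_n] = \mathbb{Z}_p[x_1, \ldots, x_n]$, where Witt vectors are controlled by their ghost components, and then push the resulting identity back down to $W_2(S)$ via reduction mod $p$. Reduction of coefficients yields a surjection $\tilde{S} \twoheadrightarrow S$, and by functoriality of $W_2$ this induces a ring homomorphism $\pi \colon W_2(\tilde{S}) \to W_2(S)$ sending $(\tilde{f}, 0) \mapsto (f, 0)$ and $([a_I] x^I, 0) \mapsto (a_I x^I, 0)$.

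Set $E := (\tilde{f}, 0) - \sum_I ([a_I] x^I, 0) \in W_2(\tilde{S})$. Applying $\pi$ to $E$ gives $(f, 0) - \sum_I (a_I x^I, 0) = (0, \Delta_1(f))$ by definition of $\Delta_1(f)$, so it suffices to identify the second Witt coordinate of $E$ as an explicit polynomial in $\tilde{S}$ and reduce it mod $p$. For this, apply the truncated ghost map $\Phi \colon W_2(\tilde{S}) \to \tilde{S} \times \tilde{S}$, $(c_0, c_1) \mapsto (c_0, c_0^p + pc_1)$; this is a ring homomorphism (the truncation of the $\Phi$ constructed in the excerpt) and is injective because $\tilde{S}$ is $p$-torsion-free. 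Since $\Phi(a, 0) = (a, a^p)$, linearity and $\tilde{f} = \sum_I [a_I] x^I$ yield
$$\Phi(E) = \Big(0,\; \tilde{f}^p - \sum_I ([a_I] x^I)^p\Big).$$

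Write $E = (c_0, c_1)$. Matching ghost components forces $c_0 = 0$ and $pc_1 = \tilde{f}^p - \sum_I ([a_I] x^I)^p$. The freshman's dream in $\tilde{S}$, namely $\bigl(\sum_I [a_I] x^I\bigr)^p \equiv \sum_I ([a_I] x^I)^p \pmod{p}$, guarantees the right-hand side is divisible by $p$, so $c_1 = \bigl(\tilde{f}^p - \sum_I ([a_I] x^I)^p\bigr)/p$ is a bona fide element of $\tilde{S}$. Applying $\pi$ to $E = (0, c_1)$ gives $\Delta_1(f) = c_1 \bmod p$, exactly the asserted formula. The key conceptual ingredient is the injectivity of $\Phi$ on $p$-torsion-free rings; it frees us from having to unpack the opaque addition polynomials $S_n$ defining the Witt ring structure, which is where a direct attack would otherwise stall.
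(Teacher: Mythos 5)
Your proof is correct, but it takes a different route from the paper's. The paper's proof is a one-line induction: write \((f,0)=\sum_I (a_I\mathbf{x}^I,0)+(0,\Delta_1(f))\) and iteratively apply the explicit addition polynomial \(S_1(X_0,X_1,Y_0,Y_1)=X_1+Y_1+\frac{(X_0+Y_0)^p-X_0^p-Y_0^p}{p}\) from Remark \ref{rmk:polyraise:w2}; the carry terms telescope to \(\frac{\tilde f^{\,p}-\sum_I([a_I]x^I)^p}{p}\). You instead lift everything to \(W_2(\mathbb{Z}_p[x_1,\ldots,x_n])\), use injectivity of the truncated ghost map on a \(p\)-torsion-free ring to read off the second Witt coordinate of \(E=(\tilde f,0)-\sum_I([a_I]x^I,0)\), and push back down by functoriality of \(W_2\) applied to the reduction \(\mathbb{Z}_p[x]\twoheadrightarrow \mathbb{F}_p[x]\). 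Every step checks out: \(\Phi(a,0)=(a,a^p)\), the first ghost component of \(E\) vanishes since \(\tilde f=\sum_I[a_I]x^I\), and \(pc_1=\tilde f^{\,p}-\sum_I([a_I]x^I)^p\) determines \(c_1\) uniquely by torsion-freeness (your appeal to the freshman's dream for divisibility is a harmless redundancy, since \(c_1\) already exists as the second coordinate of \(E\)). What your approach buys is exactly what you say: it never touches the universal polynomials \(S_n\), and it generalizes immediately to any perfect \(k\) with torsion-free \(W(k)\); what the paper's approach buys is that it works directly with the formula the implementation actually evaluates, making the connection to Algorithm \ref{alg:calc:delta1} transparent.
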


\begin{proof}
	Iteratively apply the formula of the first
	Witt polynomial \(S_{1}\) 
	to the monomials of \(\tilde{f}\).
\end{proof}

\begin{rmk}
	If \(f\) is a homogeneous polynomial of degree \(d\), 
	then \(\Delta_{1}(f)\) is a polynomial of degree \(pd\).
\end{rmk}

\begin{rmk}
    \label{rmk:lift:roi}
    Note that if \(k = \mathbb{F}_{p}\), we can always choose a lift
    of \(f\) with coefficients in \(\mathbb{Z}\), 
    not just \(W(k) = \mathbb{Z}_{p}\).
    In practice, we must choose a lift to the integers
    so we can represent it computationally.

    A similar fact holds for \(k = \mathbb{F}_{q}\) with 
    \(q \neq p\), replacing \(\mathbb{Z}\) with
    \(\mathbb{Z}[\zeta_{q-1}]\).
    This could be used to calculate the quasi-\(F\)-split 
    heights for surfaces not defined over \(\mathbb{F}_{p}\).
    More generally, if \(q = p^{e}\), one may take
    any abelian extension \(K\) of degree \(e\) over \(\mathbb{Q}\) such that
    \(p\) is an inert prime in \(K\).
    Then, there is a unique prime \(\mathfrak{p}\) lying over \(p\) 
    such that \(\mathcal{O}_{K} / \mathfrak{p} \isom k\).
    Furthermore, \(\mathcal{O}_{K} \ins W(k)\)
    so we may lift \(f\) to \(\mathcal{O}_{K}\) 
    and do arithmetic there. 
    Doing arithmetic in a degree \(e\) extension will have
    better complexity than in the cyclotomic extension, which
    has degree \(\phi(q) = \phi(p^{e}) = p^{e-1}(p-1)\), where
    \(\phi\) is the Euler totient function.
    We do not implement the calculation of quasi-\(F\)-split
    heights for \(q \neq p\).
\end{rmk}

Proposition \ref{prop:delta1:formula} gives a natural algorithm
for calculating the term \(\Delta_{1}(f)\):

\begin{algorithm}[H]
\label{alg:calc:delta1}
\caption{Calculation of \(\Delta_{1}(f)\) }
\KwInput{$f \in \mathbb{F}_p[x_1, \dots , x_n]$}
\KwOutput{$\Delta_1(f)$}

$\tilde{f} \gets \texttt{lift}(f)$\;
$D \gets \tilde{f}^p$\;

\For{$t \in \textnormal{\texttt{terms}} (\tilde{f})$}{
	$D \gets D - t^p$
}

$D \gets D / p$

\Return{D}
\end{algorithm}

\subsection{Splittings of Frobenius from a computational perspective}

Let \(S = k[x_{1}, \ldots, x_{n}]\) as before. 
We will use perspective (3) from
Remark \ref{rmk:frob:perspectives}; 
recall that we identify 
\(S\) with the target of Frobenius and
\(S^{p}\) with the source.
We see 
by counting degrees
that we have a generating set for \(S\) as an
\(S^{p}\)-module given by
all monomials
\(x_{1}^{i_{1}}\cdots x_{n}^{i_{n}}\)
where \(0 \leq i_{j} \leq p-1\) for all \(j\).
Moreover, since \(S\) is a polynomial ring, 
there are no (module-theoretic) relations
and
\(S\) is the free \(S^{p}\)-module generated by 
these monomials, i.e.  \[
S = \bigoplus_{1 \leq j \leq n,~ 0 \leq i_{j} \leq p-1}^{} x_{1}^{i_{1}}\cdots x_{n}^{i_{n}} S^{p}
.\] 
Then the projection of \(S\) to any of the direct sum components
is an element of \(\Hom(S,S^{p})\), which is a
splitting of Frobenius.
Let \(u\) be the projection onto the component of
\(x_{1}^{p-1}, \ldots, x_{n}^{p-1}\).

The splitting \(u\) plays an important role in \(F\)-singularity
theory, see for example 
\cite[Claim~2.6]{ma-polstra-2021-F-sing-comm-alg}.
For our purposes, we are only concerned with computing \(u\) 
for a polynomial in \(S\). 
Given \(f \in S\), we will first compute
\(u(f) \in S^{p}\), and then use the identification
\(S^{p} \isom S\) by taking \(p\)-th roots of 
exponents.

\SetKwComment{Comment}{// }{}

\begin{algorithm}[H]
\caption{Splitting of Frobenius}
\label{alg:naive:u}
\KwInput{$f \in S$}
\KwOutput{$u(f) \in S$}
$r \gets 0$\;
%\Comment{Discard terms of $f$ with exponents not congruent to $(p-1, \dots, p-1) \mod p$}
\For{$t \in \textnormal{\texttt{terms}} (f)$}{
	\Comment{See Def \ref{def:poly:nota} for exps() notation}
	\If{$\textnormal{\texttt{exps}} (t) \equiv (p-1, \dots, p-1) \mod p$}{
		%$f \gets f - t$
	    $\textnormal{\texttt{exps}}(t) \gets \textnormal{\texttt{exps}}(t) - (p-1, \dots, p-1)$\;
	    \Comment{This division is exact because of the previous step}
	    $\textnormal{\texttt{exps}}(t) \gets \textnormal{\texttt{exps}}(t) / p$\;
	    $r \gets r + t$
	}
}

% \For{$t \in \textnormal{\texttt{terms}} (f)$}{
% 	$\textnormal{\texttt{exps}}(t) \gets \textnormal{\texttt{exps}}(t) - (p-1, \dots, p-1)$\;
% 	\Comment{This division is exact because of the previous step}
% 	$\textnormal{\texttt{exps}}(t) \gets \textnormal{\texttt{exps}}(t) / p$\;
% 	$r \gets r + t$
% }

\Return $r$
\end{algorithm}

\subsection{The naive algorithm}

Let \(f\) be a homogeneous polynomial of degree \(n\) 
in \(S\), 
so that \(Z(f)\) is a Calabi-Yau hypersurface.
Recall from Subsection \ref{subsec:split:frob} that
\(\mathfrak{m}^{[p]} = (x_{1}^{p}, \ldots, x_{n}^{p})\)
is the Frobenius power of the maximal ideal.
Following \cite{kty-2022-fedder}, we have the following
algorithm to calculate the quasi-\(F\)-split height.

\begin{algorithm}[H]
% I think consistency of the term overrides title capitalization.
\caption{quasi-\(F\)-split height: naive algorithm}
\label{alg:qfs:naive}
\KwInput{
	Chosen bound $b \in \mathbb{N}$, \\
	~~~~~~~~~~~~~Homogeneous polynomial $f \in S$ of degree $n$
}
\KwOutput{$h(f)$}
$g \gets f^{p - 1}$\;
\If{$g \notin \mathfrak{m}^{[p]}$}{
	\Return $1$\;
}
$\Delta \gets \Delta_1(f^{p-1})$\;
$h \gets 2$\;
\While{true}{
	\If{$b < h$}{
		\Return $\infty$\;
	}
	$g \gets u(\Delta g)$\;
	\If{$g \notin \mathfrak{m}^{[p]}$}{
		\Return $h$\;
	}
	$h \gets h + 1$\;
}
\end{algorithm}

\begin{thm}
	[\cite{kty-2022-fedder}, Theorem C]
	Assume that
	\(Z(f)\) has quasi-\(F\)-split height \(h < b\).
	Then Algorithm \ref{alg:qfs:naive} terminates
	and returns \(h\).
\end{thm}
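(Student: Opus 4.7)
The plan is to identify the sequence of polynomials $g$ produced by Algorithm \ref{alg:qfs:naive} with the sequence of elements appearing in the generalized Fedder criterion of \cite[Theorem~C]{kty-2022-fedder}, and then invoke that theorem. I separate the argument into the base case $h=1$ and the inductive case $h \ge 2$.

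For the base case, the opening test $f^{p-1} \notin \mathfrak{m}^{[p]}$ is exactly the classical Fedder criterion (Theorem \ref{thm:fedder:criterion}). Since $1$-quasi-$F$-splitness coincides with $F$-splitness, the algorithm returns $1$ precisely when $h(f) = 1$. Proposition \ref{prop:delta1:formula} then guarantees that the next line correctly computes $\Delta_1(f^{p-1}) \in S$ (note that even though $f^{p-1}$ has degree $(p-1)n$, the formula is purely monomial-wise and the proposition applies verbatim). For the correctness of $u$, one checks directly that Algorithm \ref{alg:naive:u} implements the projection onto the $x_1^{p-1}\cdots x_n^{p-1}$ component of the free $S^p$-module decomposition of $S$ from Subsection \ref{subsec:split:frob}, followed by the identification $S^p \isom S$.

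For $h \ge 2$, the loop generates the sequence
\[
    g_1 = f^{p-1}, \qquad g_{m+1} = u\bigl(\Delta_1(f^{p-1}) \cdot g_m\bigr) \quad (m \ge 1),
\]
and I would invoke \cite[Theorem~C]{kty-2022-fedder}, which asserts that $R = S/(f)$ has quasi-$F$-split height equal to the smallest $m$ for which $g_m \notin \mathfrak{m}^{[p]}$. Matching conventions requires verifying that the splitting $u$ used here agrees (up to an element of $\mathfrak{m}^{[p]}$, which does not affect the containment test) with the splitting appearing in the statement of Theorem C; this reduces to comparing the chosen free-module component in the decomposition of $S$ over $S^p$. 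Given this identification, the loop exits at iteration $h$ and returns $h$.

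Termination under the hypothesis $h < b$ is immediate, since the counter strictly increases and the loop exits as soon as $g \notin \mathfrak{m}^{[p]}$, which by the above happens at iteration $h$. The main obstacle in a self-contained treatment is \cite[Theorem~C]{kty-2022-fedder} itself, whose proof rests on a careful analysis of the map $W_2(R) \to R$ witnessing quasi-$F$-splitness, combined with the delta formula \cite[Theorem~D]{kty-2022-fedder} that reduces the obstruction to the explicit polynomial manipulations above; we take this as a black box.
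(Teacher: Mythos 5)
Your proposal is correct and follows the same route as the paper, whose entire proof is the single sentence that the statement is a rephrasing of \cite[Theorem~C]{kty-2022-fedder}; you simply make explicit the bookkeeping (identifying the algorithm's sequence $g_m$ with the one in Theorem~C, checking the base case against the classical Fedder criterion, and noting termination) that the paper leaves implicit. No gap.
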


\begin{proof}
	This is just rephrasing \cite[Theorem~C]{kty-2022-fedder}.
\end{proof}

In the case of Calabi-Yau hypersurfaces, we have bounds on the height by
\cite{van-der-geer-katsura-2003-calabi-yau},
so we can deterministically recover the height.  
See also \cite[Theorem~0.1]{artin-1974-k3-surfaces},
for the case of K3 surfaces.
For a K3 surface, the height (if finite) is bounded by 10.

\begin{ex}
    Let \(p=3\), and let
    \(f = x^4 + y^4 + z^4 + w^4 \in S = k[x,y,z,w]\).
    Raising \(f\) to the \(p-1 = 2\) in \(\mathbb{F}_{3}\), we get
    \[
    g = x^8 + 2x^4y^4 + 2x^4z^4 + 2x^4w^4 + y^8 + 2y^4z^4 + 2y^4w^4 + z^8 + 2z^4w^4 + w^8
    .\] 
    The only monomial of degree \(8\) that is not contained
    in \(\mathfrak{m}^{[p]} = (x^{3}, y^{3}, x^{3}, w^{3})\) is \(x^{2}y^{2}z^{2}w^{2}\).
    Call this monomial \(c\).
    Since this coefficient of \(c\) in \(g\) is zero, we conclude
    by the classical Fedder's criterion (Theorem \ref{thm:fedder:criterion})
    that \(g\) is not \(F\)-split.
    Next, we must calculate \(\Delta_{1}(g)\).
    To do this, we take a lift of \(g\) by regarding the coefficients
    as being in \(\mathbb{Z}\) instead of \(\mathbb{F}_{3}\).
    We raise \(g^3\) in the integers, and subract by every term of \(g\) 
    cubed.

    Then, we multiply \(g\) by \(\Delta_{1}(g)\), getting a polynomial of degree 24.
    We apply the splitting of Frobenius, as described in Algorithm \ref{alg:naive:u}, 
    obtaining another polynomial of degree \(8\). 
    We can now check the term \(x^2y^2z^2w^2\) again, and it will again be zero. 
    For this choice of $f$, all of the exponents for all quantities 
    must be multiples of four.\footnote{
    In fact, this implies that the degree \(8\) polynomial is zero.
    However, this phenomenon is quite rare.
    }
    We then continue, multiplying, applying the splitting, checking the coefficient, 
    until we conclude that the height is greater than 10. 
    Now, by Hodge theory, if the height is greater than 10, it must be infinity. 
    So we conclude that the height is infinity.
\end{ex}

\subsection{The key idea: finding the matrix of the linear operator ``multiply then split''}

An implementation of Algorithm \ref{alg:qfs:naive}
is provided in MMPSingularities.jl.
The bottleneck ends up being polynomial multiplication, 
in two places:
\begin{enumerate}[(1)]
    \item raising (a lift of) \(g = f^{p-1}\) to the \(p\)-th power 
        (in line 2 of Algorithm \ref{alg:calc:delta1})
    \item multiplying \(g\) by \(\Delta_{1}(f^{p-1})\) 
        (in line 11 of Algorithm \ref{alg:qfs:naive})
\end{enumerate}

\noindent For a quartic K3 surface of characteristic \(5\),
for example, each step takes about 1 second 
using FLINT \cite{flint-2023-flint}. 

We now explain how to overcome the second
bottleneck. Since \(Z(f)\) is Calabi-Yau
(i.e. \(\deg f = n\)), we have that \(f^{p-1}\) has degree
\(n(p-1)\). 
Furthermore, by Proposition \ref{prop:delta1:formula}
the degree of \(\Delta_{1}(f^{p-1})\) 
is \(np(p - 1)\).
Thus, \(\Delta_{1}(f^{p-1})g\) has degree
\(n(p^{2} - 1)\); however, the effect 
of \(u\) on any polynomial is subtracting \(p-1\)
from the exponents of the terms and dividing by \(p\) 
(see Algorithm \ref{alg:naive:u}).
Thus, the ``multiply then split'' map 
\(g \mapsto u(\Delta_{1}(f^{p-1}) g)\) 
is a linear map from 
the space of homogeneous polynomials of degree \(n(p-1)\) 
to itself.
As a consequence of this observation,
if we can efficiently compute the matrix of 
\(g \mapsto u(\Delta_{1}(f^{p-1})g)\),
we can repeatedly apply matrix-vector multiplication.

Furthermore, when \(g\) is written as a vector 
in the basis of homogeneous monomials of degree
\(n(p-1)\), we can test if \(g \notin \mathfrak{m}^{[p]}\) 
in an especially simple way: the only 
monomial that is not in \(\mathfrak{m}^{[p]}\) is
\(x_{1}^{p-1}\cdots x_{n}^{p-1}\), 
see for example \cite{kty-2022-fedder}.
Thus, we can check if a single element of the vector representing
\(g\) is nonzero.

The algorithm for Fedder's criterion then becomes:

\begin{algorithm}[H]
\caption{Quasi-\(F\)-split height: matrix-based algorithm}
\label{alg:qfs:matrix}
\KwInput{
	Chosen bound $b \in \mathbb{N}$, \\
	~~~~~~~~~~~~~Homogeneous polynomial $f \in S$ of degree $n$
}
\KwOutput{$h(f)$}
$g \gets f^{p - 1}$\;
\If{$g \notin \mathfrak{m}^{[p]}$}{
	\Return $1$\;
}
$\Delta \gets \Delta_1(f^{p - 1})$\;
$M \gets $ the matrix of $g^{\prime} \mapsto u(\Delta g^{\prime})$\;
$h \gets 2$\;
$g_v \gets $ the representation of $g$ as a vector\;
$i \gets $ the index of the monomial $x_{1}^{p - 1} \dots x_{n}^{p - 1}$\;
\While{true}{
	\If{$b < h$}{
		\Return $\infty$\;
	}
	$g_v \gets Mg_v$\;
	\If{$g_v[i] \neq 0$}{
		\Return $h$\;
	}
	$h \gets h + 1$
}
\end{algorithm}

Thus, we have reduced our problem (algorithmically, at least)
to finding the matrix of the ``mulitply then split'' operation.

\section{Algorithms for finding the matrix of ``multiply then split''}

To find the matrix of ``multiply then split''
in the Algorithm for the quasi-\(F\)-split height
of a Calabi-Yau hypersurface, 
we must first multiply all possible 
monomials by \(\Delta_{1}(f)\) 
and then apply the map \(u\).
Here, we consider a slightly more general problem.
As usual, let \(S = k[x_{1}, \ldots, x_{n}]\).
Let $S_\ell$ denote the vector space of homogeneous degree $\ell$ polynomials in \(S\).
We let \(B_{\ell}\) denote a lexographically-ordered list
of the monomial basis of \(S_{\ell}\).
Fix some \(D \in \mathbb{N}\), and let \(\Delta \in S_D\).
Furthermore, fix \(d \in \mathbb{N}\).
We consider the problem of finding the matrix of 
\(g \mapsto u(\Delta g)\)
on the space \(S_{d}\).
The target of this map is
\(S_{d^{\prime}}\), where
\(d^{\prime} \colonequals \frac{d+D-n-1}{p}\).
By convention, if \(d^{\prime}\) is not an integer,
we declare \(S_{d^{\prime}}\) to be zero.
Note that in this case there are
there are no terms which survive \(u\), so
\(g \mapsto u(\Delta g)\) is indeed the zero map.

\begin{rmk}
    If \(\Delta = \Delta_{1}(f^{p-1})\) and \(d = n(p-1)\),
	as in the case of calculating the quasi-\(F\)-split
	height of a Calabi-Yau hypersurface, 
	then \(d^{\prime} = d\) and the matrix is 
	square.
\end{rmk}

Our first observation is that naively 
multiplying and then applying \(u\) 
as in Algorithm \ref{alg:naive:u}
does plenty of unnecessary work.
In particular, any term in the 
product \(\Delta g\) with exponents 
not congruent to 
\((p-1, \ldots, p-1) \mod p\)
is not needed. 
Even if \(g\) is a monomial, giving a linear
algorithm for multiplication, using naive
multiplication stores a large amount of unnecessary terms
in memory.
Both of our improved algorithms ignore
all of these unnecessary terms.

In what follows, 
when we apply arithmetic operations to arrays, 
(particularly addition, multiplication, and modulo)
we mean componentwise application of these operations.
Use of arbitrary componentwise operations is called
\textit{broadcasting} in Julia, and we will
sometimes use this term.
Homogenoeus polynomials are represented in computer 
memory as a list of their terms. 
This is reflected in our notation, 
thus the meaning of ``for \(\delta \in \Delta\)''
is a for loop though all the monomials
of \(\Delta\).

\begin{defn}
    \label{def:poly:nota}
    The \textit{exponent tuple} of a monomial 
    $m = ax_{1}^{d_1}, \dots, x_{n}^{d_n}$ is $(d_1, \dots, d_n)$. 
	We will denote it $\exps(m)$, and we also 
    define $\coeff(m) \colonequals a$
\end{defn}

\begin{defn}
    The weak integer compositions of $k$ into $n$ parts, 
    denoted $\wics(k, n)$, is the set of ordered tuples 
	\((d_{1}, \ldots, d_{n})\) such that 
    \(d_{1} + \ldots + d_{n} = k\).
\end{defn}

\begin{ex}
    $\wics(2, 3) = \lbrace (2, 0, 0), (0, 2, 0), (0, 0, 2), (1, 1, 0), (1, 0, 1), (0, 1, 1) \rbrace$.
\end{ex}

Thus, $\wics(d, n)$ generates the exponent tuples of the 
monomial basis of the vector space of $d$-homogeneous 
$n$-variate polynomials over a field $F$.
It follows that the dimension of the vector space is $|\wics(k, n)|$.

\begin{lem}
    \label{lem:wics:size}
    $|\wics(k, n)| = \binom{k + n - 1}{n - 1}$
\end{lem}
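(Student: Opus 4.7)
The plan is to establish the classical \emph{stars and bars} bijection. Explicitly, I would exhibit a bijection between $\wics(k, n)$ and the set of binary strings of length $k + n - 1$ containing exactly $n - 1$ ones (the ``bars''), from which the formula $\binom{k+n-1}{n-1}$ follows by counting such strings directly.

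The bijection is constructed as follows. Given a tuple $(d_1, \ldots, d_n) \in \wics(k, n)$, I would associate the string consisting of $d_1$ zeros (``stars''), followed by a one, followed by $d_2$ zeros, followed by a one, $\ldots$, followed by a final block of $d_n$ zeros. Since $d_1 + \cdots + d_n = k$ and we insert exactly $n - 1$ separating ones, the total length is $k + n - 1$ with exactly $n - 1$ ones as required. Conversely, given such a binary string, the $n - 1$ ones partition the remaining $k$ zero positions into $n$ (possibly empty) blocks whose sizes form a tuple in $\wics(k, n)$. The two constructions are visibly inverse to each other.

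The main step is just verifying that this map is well-defined and bijective, which is essentially bookkeeping. The count then follows because choosing a binary string of length $k + n - 1$ with exactly $n - 1$ ones is the same as choosing the $n - 1$ positions of the ones from $k + n - 1$ total positions, yielding $\binom{k+n-1}{n-1}$ strings.

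There is no real obstacle; the only thing to be slightly careful about is the edge cases $k = 0$ (the unique tuple is $(0, \ldots, 0)$, matched by the all-ones string, and $\binom{n-1}{n-1} = 1$) and $n = 1$ (the unique tuple is $(k)$, matched by the all-zeros string, and $\binom{k}{0} = 1$), both of which the bijection handles without modification. Alternatively, one could argue by induction on $n$ using the recurrence $|\wics(k, n)| = \sum_{j=0}^{k} |\wics(k-j, n-1)|$ together with the hockey stick identity, but the bijective proof is cleaner and more self-contained.
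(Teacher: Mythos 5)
Your proof is correct and is exactly the ``stars and bars'' argument that the paper invokes by name without writing out; you have simply supplied the explicit bijection. No further comment is needed.
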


\begin{proof}
	This is a classical argument which goes by the 
    name ``sticks and stones,'' ``stars and bars,'' or 
    ``dots and dividers.''
\end{proof}

\begin{rmk}
    The number of terms of \(\Delta\) is bounded 
    above by \(\binom{D+n-1}{n-1}\) by Lemma \ref{lem:wics:size}.
    Likewise, the number of elements of 
    \(B_{d}\) is \(\binom{d+n-1}{n-1}\).
    For all of our algorithmic complexity calculations, 
    we let $\ell_{d} = \length(B_{d}) = \binom{n + d - 1}{n - 1}$, 
    and $\ell_{\Delta} = \length(\Delta) \leq \binom{n + D - 1}{n - 1}$.
    In the case of the quasi-\(F\)-split height of a K3 surface, 
    $\ell_{d} = \binom{n(p - 1) + n - 1}{n - 1} = \binom{np - 1}{n - 1}$, 
    and $\ell_{\Delta} = \binom{np(p - 1) + n - 1}{n - 1} = \binom{n(p^2 - p + 1) - 1}{n - 1}$.
\end{rmk}

\begin{defn}
	Let \(m\) be a monomial in \(S\). 
	Then we say that \(m\) \textit{matches}
	with another monomial \(m^{\prime}\) 
	if the monomial \(mm^{\prime}\) is
	not killed by \(u\).
\end{defn}

Concretely, this means that $\exps(m) + \exps(m^{\prime}) \equiv (p - 1, \dots, p - 1) \mod p$.

\subsection{The trivial algorithm}

\begin{algorithm}[H]
    \caption{Matrix of multiply then split: \triv}
    \label{alg:matrix:trivial}
    \KwInput{
        $\Delta, B_d, B_{d^\prime}, p$
    }
    \KwNotation{Let $\ell_d = \code{length}(B_d), \ell_{d^\prime} = \code{length}(B_{d^\prime})$}
    \KwOutput{$\ell_{d^\prime} \times \ell_d $ matrix representing ``multiply then split''}
    $M \gets \text{zero matrix of size } \ell_{d^\prime} \times \ell_d $\;
    \For{$m \in B_d$}{
        \For{$\delta \in \Delta$}{
            \Comment{If $\delta$ matches with $m$}
            \If{$\code{exps}(\delta) + \code{exps}(m) \equiv (p - 1, \dots, p - 1) \mod p$}{
                \Comment{Apply $u$ to $\delta m$}
                $\text{res} \gets (\code{exps}(\delta) + \code{exps}(m) - (p - 1, \dots, p - 1)) / p$\;
                $\text{row} \gets \code{indexof}(\text{res},B_{d^{\prime}})$\;
                $\text{col} \gets \code{indexof}(m, B_d)$\;
                $M[\text{row}, \text{col}] = \code{coeff}(\delta)$
            }
        }
    }
    \Return $M$
\end{algorithm}

In this algorithm, which we call \triv, we iterate through 
the monomials $m \in B_{d}$, each corresponding to a column in the
resulting matrix. For each monomial, we search for terms 
that match $\delta \in \Delta$, apply $u$ to their product $\delta m$, and 
get the lexographical index of the result to find which 
row to add to.

In this algorithm, the matrix can be generated in 
$O(\ell_{d}\ell_{\Delta})$ operations. 
This can be
seen from the nested loops, assuming that integer 
arithmetic operations are constant time.

In practice, the majority of the combinations of 
terms of $\Delta$ and $B_{d}$ don't match. This means in
terms of $\Delta$ and $B_{d}$ don't match. This means in
Algorithm \ref{alg:matrix:trivial}, much of our 
runtime is wasted on checking for whether terms match.
However, we emphasize that this algorithm, if 
implemented in parallel on the GPU, is indeed
fast enough to not be a bottleneck in practice.

\subsection{Modified merge-based algorithm}

We now introduce an algorithm that utilizes 
properties of monomial ordering to reduce the 
number of comparisons 
performed in checking whether two terms match.

\begin{lem}
	\label{lem:tuples:modp}
	Let \(X = (x_{1}, \ldots, x_{n}),
	Y = (y_{1}, \ldots, y_{n})\)
	be in \(\{0, \ldots, p-1\}^{n} \ins \mathbb{Z}^{n}\).
	Then \(X + Y = 
	(p-1, \ldots, p-1) \mod p\)
	if and only if 
	\(X + Y = 
	(p-1, \ldots, p-1)\).
\end{lem}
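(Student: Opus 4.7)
The plan is to reduce to a coordinatewise statement and then use an elementary bound on the range of each $x_i + y_i$. The ``if'' direction is immediate since equality implies congruence, so the entire content is in the ``only if'' direction. For this, I would fix an index $i$ and argue that $x_i + y_i$ lies in a range so small that $p-1$ is the only element of the range that is congruent to $p-1$ modulo $p$.

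Concretely, since $x_i, y_i \in \{0, 1, \ldots, p-1\}$, I would observe that $0 \le x_i + y_i \le 2(p-1) = 2p-2$. The integers in this interval congruent to $p-1 \pmod{p}$ are exactly $p-1$ and $2p-1$; but $2p-1 > 2p-2$, so $2p-1$ is excluded. Hence $x_i + y_i \equiv p-1 \pmod{p}$ forces $x_i + y_i = p-1$ for each $i$, and running this over all $i$ gives the componentwise equality $X + Y = (p-1, \ldots, p-1)$.

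There is no real obstacle here: the lemma is essentially a remark that two small nonnegative representatives of a residue class cannot sum to more than one full period past the target residue. I would present the proof in two or three lines, making explicit the inequality $x_i + y_i \le 2p-2 < 2p-1$ which is the whole point. In the paper this lemma will presumably be used to replace a modular test $\exps(\delta) + \exps(m) \equiv (p-1, \ldots, p-1) \pmod p$ by the exact equality $\exps(\delta) + \exps(m) = (p-1, \ldots, p-1)$, which is cheaper and order-compatible, so I would phrase the conclusion with that downstream use in mind.
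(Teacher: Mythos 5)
Your proposal is correct and follows exactly the same approach as the paper, whose entire proof is the observation that each coordinate satisfies $x_i + y_i \le 2(p-1) = 2p-2$; you simply spell out the resulting exclusion of $2p-1$ more explicitly. No issues.
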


\begin{proof}
	Each coordinate has \(x_{i} + y_{i} \leq 2(p-1) = 2p-2\).
\end{proof}

\begin{cor}
    \label{cor:unique:match}
	Let $X$ be as above.
	Then there exists a unique
	match \(m(X) \in \{0, \ldots, p-1\}^{n}\) 
\end{cor}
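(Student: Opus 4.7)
The plan is to exhibit the unique match explicitly and then read off both existence and uniqueness from Lemma \ref{lem:tuples:modp}. Concretely, I would define
\[
    m(X) \colonequals (p-1, \ldots, p-1) - X,
\]
where the subtraction is componentwise in $\mathbb{Z}^{n}$. Since each coordinate $x_{i}$ lies in $\{0, \ldots, p-1\}$, the coordinate $p-1-x_{i}$ also lies in $\{0, \ldots, p-1\}$, so $m(X)$ belongs to $\{0, \ldots, p-1\}^{n}$. By construction $X + m(X) = (p-1, \ldots, p-1)$, which is trivially congruent to $(p-1, \ldots, p-1) \bmod p$, so $m(X)$ is indeed a match.

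For uniqueness, I would suppose that some $Y \in \{0, \ldots, p-1\}^{n}$ also satisfies $X + Y \equiv (p-1, \ldots, p-1) \bmod p$. Lemma \ref{lem:tuples:modp} upgrades this congruence to the equality $X + Y = (p-1, \ldots, p-1)$ in $\mathbb{Z}^{n}$, from which $Y = (p-1, \ldots, p-1) - X = m(X)$ follows at once.

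There is essentially no obstacle here: the only nontrivial content is Lemma \ref{lem:tuples:modp}, which rules out the possibility of a ``wraparound'' match where some coordinate sum equals $2p-1$ rather than $p-1$. Once that observation is in hand, the corollary is an immediate bookkeeping statement, and $m(X)$ can be computed componentwise with a single subtraction, which is exactly the feature that the merge-based algorithm in the next subsection will exploit.
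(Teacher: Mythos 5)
Your proof is correct and takes exactly the same approach as the paper: the paper's proof simply states $m(X) = (p-1, \ldots, p-1) - X$ and invokes Lemma \ref{lem:tuples:modp}. You have merely filled in the routine details of existence and uniqueness that the paper leaves implicit.
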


\begin{proof}
	\(m(X) = 
	(p-1, \ldots, p-1) - X\).
	The claim follows from 
	\ref{lem:tuples:modp}.
\end{proof}

Essentially, this corollary says that
when we consider the exponent tuple mod $p$ of 
an arbitrary monomial of \(B\),
it has a unique
match mod \(p\).

\begin{cor}
	\label{cor:match:order}
	Let \(\leq_{lex}\) denote the 
	lexographical ordering.
	If \(X \leq_{lex} Y\),
	then 
	\(m(Y) \leq_{lex} m(X)\)
\end{cor}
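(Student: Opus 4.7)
The plan is to unpack the definition of lexicographic order and use the explicit formula $m(X) = (p-1,\ldots,p-1) - X$ from Corollary \ref{cor:unique:match}. Recall that $X \leq_{lex} Y$ means either $X = Y$, or there is a smallest index $i$ with $x_j = y_j$ for all $j < i$ and $x_i < y_i$. The equality case is immediate, since then $m(X) = m(Y)$, so I would focus on the strict case.

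First, I would observe that componentwise subtraction from the constant tuple $(p-1,\ldots,p-1)$ preserves equality at any coordinate: if $x_j = y_j$, then $(p-1) - x_j = (p-1) - y_j$, so $m(X)_j = m(Y)_j$ for every $j < i$. Second, at the distinguishing coordinate $i$, the strict inequality $x_i < y_i$ becomes $(p-1) - x_i > (p-1) - y_i$, i.e.\ $m(X)_i > m(Y)_i$. Combining these two facts shows that $m(Y)$ and $m(X)$ agree on coordinates $1,\ldots,i-1$ and that $m(Y)_i < m(X)_i$, which is exactly the condition $m(Y) \leq_{lex} m(X)$ (in fact, strictly less).

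There is essentially no obstacle here: the statement is just the observation that the order-reversing affine map $X \mapsto (p-1,\ldots,p-1) - X$ on $\{0,\ldots,p-1\}^n$ reverses the lex order, which in turn follows because lex order is determined coordinate-by-coordinate at the first disagreement, and an order-reversing map on $\{0,\ldots,p-1\}$ is applied uniformly to each coordinate. The only care I would take is to write the argument so that the weak inequality $\leq_{lex}$ (rather than $<_{lex}$) is handled cleanly, by explicitly splitting into the $X = Y$ and $X <_{lex} Y$ cases at the start.
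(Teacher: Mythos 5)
Your proof is correct and is just the detailed version of what the paper asserts in one line ("follows from the definition of lexicographical ordering"): the map $X \mapsto (p-1,\ldots,p-1) - X$ preserves equality at each coordinate and reverses the strict inequality at the first disagreement, hence reverses lex order. Same approach, simply spelled out.
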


\begin{proof}
	Follows from the definition of lexographical
	ordering.
\end{proof}

Using these two corollaries, we have the following algorithm,
which we call \merge:

\begin{algorithm}[H]
\caption{Matrix of multiply then split: \merge}
\label{alg:theta:merge}
\KwInput{
    $\Delta, B_d, B_{d^\prime}, p$
}
\KwNotation{Let $\ell_d = \code{length}(B_d), \ell_{d^\prime} = \code{length}(B_{d^\prime})$}
\KwOutput{$\ell_{d^\prime} \times \ell_d $ matrix representing ``multiply then split''}
$M \gets \text{zero matrix of size } \ell_{d^\prime} \times \ell_d $\;
$L \gets [\code{exps}(\delta) \modp ~|~ \delta \in \Delta]$\;
$R \gets [\code{exps}(m) \modp ~|~ m \in B_{d}]$\;
\Comment{Lexicographically sort $L$ and $R$ and keep the permutations}
$P_1 =$ \code{sortperm}\((L)\), $P_2 =$ \code{sortperm}\((R)\) \;
\Comment{Permute \(\Delta\) and \(B_d\)}
$\Delta^{\prime} \gets \Delta[P_1], B_d^{\prime} \gets B_d[P_2]$\;
\(l \gets 1, r \gets \code{length}(R)\)\;
\While{\(r \geq 1 ~\textnormal{and}~ l \leq \code{length}(L)\)}{
    cmp \(\gets L[l] + R[r] - (p-1, \ldots, p-1)\)\;
    \uIf(\tcp*[h]{If lex sum of pair too small, increment index of $L$}){\textnormal{cmp < 0}}{
        \(l \gets l + 1\)\;
    }\uElseIf(~\tcp*[h]{If lex sum of pair too big, decrement index of $R$}){\textnormal{cmp > 0}}{
        \(r \gets r - 1\)\;
    }\Else(~\tcp*[h]{Match found}){
        matchr \(\gets R[r]\)\;
        numLmatches \(\gets\) the number of adjacent entries in \(L\) which are equal to \(L[l]\)\;
        \Comment{For each monomial of $B_d$}
        \While{$R[r] ~\code{==}~ \textnormal{matchr} ~\textnormal{and}~ 1 \leq r$}{
            \Comment{Find matching terms of $\Delta$}
            \For{\(ll \in \{l, l+1, \ldots, l+\textnormal{numLmatches} - 1\})\)}{
                \Comment{And apply $u$}
                res \(\gets (\code{exps}(\Delta^{\prime}[ll]) + \code{exps}(B^{\prime}[r]) - (p-1, \ldots, p-1)) / p\)\;
                \(\text{col} \gets \code{indexof}(B^{\prime}[r],B_{d})\)\;
                \(\text{row} \gets \code{indexof}(\text{res},B_{d^{\prime}})\)\;
                \(M[\text{row},\text{col}] = \code{coeff}(\Delta^{\prime}[ll])\)\;
            }
            \(r \gets r - 1\)\;
        }
        \(l \gets l + \text{numLmatches} - 1\)\;
    }
}
\Return $M$
\end{algorithm}

Note that the inner while loops account for the fact that
after modding by \(p\), one does not expect 
either array to have unique elements.
The algorithm is justified by the previous corollaries.

In practice, instead of sorting the arrays in place, 
we use a method like Julia's \texttt{sortperm}
to get the sort permutation.

Because we perform two sorts, then a linear merge, 
we perform 
$\ell_{d} \log \ell_{d} + \ell_{\Delta} \log \ell_{\Delta} + \ell_{d} + \ell_{\Delta}$ operations, 
giving the algorithm complexity 
$O(\max(\ell_{d} \log \ell_{d}, \ell_{\Delta} \log \ell_{\Delta}))$.
In practice (for Calabi-Yau hypersurfaces), 
the $\ell_{\Delta} \log \ell_{\Delta}$ usually dominates.

\subsection{Weak integer compositions-based algorithm}

\begin{lem}
    \label{lem:generate:matching}
    The set of exponent tuples of \(B_{d}\) that match with $\delta$ is given by 
    \[
        \left\{ m(\exps(\delta) \mathmod p) + p\cdot w ~\Big|~ w \in \wics 
        \left(\frac{d - sum(m(\exps(\delta) \mathmod p))}{p}, n \right) \right\}
    \]
    where $sum(X)$ is the sum of the elements of tuple $X$, and $m$ is the matching function from Corollary \ref{cor:unique:match}
\end{lem}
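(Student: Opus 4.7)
The plan is to simply unpack the definition of ``matches'' and apply Corollary \ref{cor:unique:match}, then translate the degree condition for $B_d$ into a weak integer composition condition on the ``quotient'' part of the exponent tuple. The proof will be a pair of inclusions, though both directions are essentially the same calculation.

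First I would set up notation: let $e = \exps(\delta)$, let $\bar e = e \mathmod p$, and let $\mu = m(\bar e) = (p-1, \ldots, p-1) - \bar e$ be the unique matching tuple mod $p$ guaranteed by Corollary \ref{cor:unique:match}. I would then take an arbitrary monomial $m' \in B_d$ with exponent tuple $e' = \exps(m')$ and note that, by definition, $\delta$ and $m'$ match precisely when $e + e' \equiv (p-1, \ldots, p-1) \mathmod p$, which (reducing $e$ mod $p$) is equivalent to $e' \equiv \mu \mathmod p$.

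Next I would write $e' = \mu + p \cdot w$ for some $w \in \mathbb{Z}^n$, which is forced by the congruence. Since every entry of $\mu$ lies in $\{0, \ldots, p-1\}$ and every entry of $e'$ is a nonnegative integer, each entry of $w$ is nonnegative, so $w \in \mathbb{Z}_{\geq 0}^n$. Imposing the degree condition $\sum_i e'_i = d$ then gives
\[
    \operatorname{sum}(\mu) + p \cdot \operatorname{sum}(w) = d,
\]
so $\operatorname{sum}(w) = (d - \operatorname{sum}(\mu))/p$, which places $w$ in $\wics((d - \operatorname{sum}(\mu))/p, n)$ as claimed. For the reverse inclusion, any $w$ in that set yields a tuple $\mu + p w$ with nonnegative entries, summing to $d$, and congruent to $\mu$ (hence matching $\delta$) mod $p$, so it arises as $\exps(m')$ for a unique monomial $m' \in B_d$.

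The only real subtlety is the implicit claim that if no element of $B_d$ matches with $\delta$, then the set on the right is empty; this is handled by the observation that $(d - \operatorname{sum}(\mu))/p$ must be a nonnegative integer for $\wics$ to be nonempty, and this integrality/positivity is exactly the condition for a matching monomial of degree $d$ to exist. I would mention this as a brief aside rather than as a separate case. I do not anticipate any real obstacle; the entire argument is a bookkeeping translation of Corollary \ref{cor:unique:match} combined with the degree grading.
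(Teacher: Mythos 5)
Your proof is correct and is exactly the ``direct computation from the definitions'' that the paper's one-line proof refers to; you have simply written out the bookkeeping (reduction mod $p$, the decomposition $e' = \mu + p w$, and the degree count) that the authors leave implicit. No issues.
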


\begin{proof}
    Direct computation from the definitions of weak integer compositions and matching terms.
\end{proof}

Because this expression is quite convoluted, we 
provide an example of a computation 
that may come up in calculating the quasi-\(F\)-split 
height of a K3 quartic surface over $\mathbb{F}_5$.

\begin{ex}
    Let $p = 5$, \(n = 4\), and $\delta$ a monomial 
    with exponent tuple $(21, 19, 22, 18)$. 
    Then $B_{16}$ is the monomial basis of 
    \(S_{16}\) (over $\mathbb{F}_5$). 
    To find the monomials of $B_{16}$ that match with 
    $\delta$, we begin by reducing $(21, 19, 22, 18) \mathmod 5 = (1, 4, 2, 3)$.
    This makes $m((1, 4, 2, 3)) = (3, 0, 2, 1)$ the 
    exponent tuple of a matching monomial mod \(p\).
    However, \((3, 0, 2, 1)\) isn't in $B_{16}$. 
    For it to be in $B_{16}$, we need to add $16 - (3 + 0 + 2 + 1) = 10$ 
    to the degree, and to maintain congruence to 
    $(4, 4, 4, 4) \mathmod 5$, we need to add two $5$'s 
    to the elements of $(3, 0, 2, 1)$. This corresponds 
    to $\{5w ~|~ w \in \wics(2, 4)\}$, which is:
    \begin{align*}
        \{&(10, 0, 0, 0), (5, 5, 0, 0), (5, 0, 5, 0), (5, 0, 0, 5), (0, 10, 0, 0), \\
        &(0, 5, 5, 0), (0, 5, 0, 5), (0, 0, 10, 0), (0, 0, 5, 5), (0, 0, 0, 10)\}
    \end{align*}
        
    \noindent Adding each of these to $(3, 0, 2, 1)$, we get the 
    exponent tuples of monomials that match with $(21, 19, 22, 18)$:
    \begin{align*}
        \{&(13, 0, 2, 1), (8, 7, 0, 1), (8, 0, 7, 1), (8, 0, 2, 6), (3, 10, 2, 1), \\
        &(3, 5, 7, 1), (3, 5, 2, 6), (3, 0, 12, 1), (3, 0, 7, 6), (3, 0, 2, 11)\}
    \end{align*}    
\end{ex}

Because we need it later for our complexity computations, we have the corollary:

\begin{cor}
    \label{cor:num:matches}
    The number of matching monomials of any term $\delta$ is 
    bounded above by $\binom{\lfloor \frac{d}{p} \rfloor + n - 1}{n - 1}$.
\end{cor}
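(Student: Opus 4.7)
The plan is to derive the bound directly from Lemma \ref{lem:generate:matching} together with Lemma \ref{lem:wics:size}, by counting the parametrizing set of weak integer compositions and then using the monotonicity of the binomial coefficient in its upper argument.

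First I would apply Lemma \ref{lem:generate:matching} to reduce the count to a weak integer composition count. Write $s \colonequals \mathrm{sum}(m(\exps(\delta) \bmod p))$ and $k \colonequals (d - s)/p$. Lemma \ref{lem:generate:matching} shows that the matching monomials are in bijection with $\wics(k, n)$. Here a small edge case must be addressed: if $(d-s)/p$ is not a non-negative integer, then there are no matches (the set is empty) and the desired bound holds trivially. So we may assume $k \in \mathbb{N}$.

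Next I would apply Lemma \ref{lem:wics:size} to rewrite $|\wics(k, n)| = \binom{k + n - 1}{n - 1}$. Then, since each component of $m(\exps(\delta) \bmod p)$ lies in $\{0, \ldots, p-1\}$, we have $s \geq 0$, so $k = (d - s)/p \leq d/p$. Being an integer, $k \leq \lfloor d/p \rfloor$. By the monotonicity of $\binom{\,\cdot\, + n - 1}{n-1}$ in its upper argument, we conclude
\[
    \binom{k + n - 1}{n - 1} \;\leq\; \binom{\lfloor d/p \rfloor + n - 1}{n - 1},
\]
which is the desired bound.

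The main obstacle is really just bookkeeping: ensuring that $k$ is a non-negative integer so that $\wics(k, n)$ is defined and nonempty, and verifying that the inequality $k \leq \lfloor d/p \rfloor$ holds in this integer setting. Since $s \geq 0$ automatically from the definition of $m$, no nontrivial analysis is needed, and the bound is saturated precisely when $s = 0$, i.e., when $\exps(\delta) \equiv (p-1, \ldots, p-1) \bmod p$.
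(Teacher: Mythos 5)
Your proof is correct and follows essentially the same route as the paper: reduce to counting $\wics\!\left(\frac{d - \mathrm{sum}(m(\exps(\delta) \bmod p))}{p}, n\right)$ via Lemma \ref{lem:generate:matching}, observe that this argument is at most $\lfloor d/p \rfloor$, and conclude with Lemma \ref{lem:wics:size}. You are merely a bit more explicit than the paper about the degenerate case and the monotonicity step, which is fine.
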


\begin{proof}
    By Lemma \ref{lem:generate:matching}, there are 
    $\Big |\wics \left(\frac{d - sum(m(\exps(\delta) \mathmod p))}{p}, n \right) \Big |$ 
    monomials that match with $\delta$.
    Because the division is exact, 
    $\lfloor \frac{d}{p} \rfloor \geq \frac{d - sum(m(\exps(\delta) \mathmod p))}{p}$,
    giving the upper bound 
    $|\wics(\lfloor \frac{d}{p} \rfloor, n)| = \binom{\lfloor \frac{d}{p} \rfloor + n - 1}{n - 1}$.
\end{proof}

Denoting the expression from Lemma \ref{lem:generate:matching} 
by \code{generateMatchingMonomials}$(\delta, p, M_d)$, we have
the following algorithm, which we call \wicsalg:

\begin{algorithm}[H]
    \caption{Matrix of multiply then split: \wicsalg}
    \KwInput{$\Delta, B_{d}, B_{d^{\prime}}, p$}
    \KwNotation{Let \(\ell_{d} = \length(B_{d})\), 
    \(\ell_{d^{\prime}} = \length(B_{d^{\prime}})\)}
    \KwOutput{$\ell_{d^{\prime}} \times \ell_{d}$ 
    	matrix representing ``multiply then split''}
    $M \gets \text{zero matrix of size } \ell_{d^\prime} \times \ell_d $\;
    \For{$\delta \in \Delta$}{
        mons $\gets \code{generateMatchingMonomials}(\delta, p, B_{d})$\;
        \For{$m \in \textnormal{mons}$}{
            $\text{col} \gets \code{indexof}(m,B_{d})$\;
			res $\gets (\code{exps}(\delta) + \code{exps}(m) - (p-1, \ldots, p-1)) / p$\;
			$\text{row} \gets \code{indexof}(\text{res},B_{d^{\prime}})$\;
            $M[\text{row}, \text{col}] = \code{coeff}(\delta)$\;
        }
    }
    \Return $M$
    \label{alg:matrix:WICS}
\end{algorithm}

This algorithm 
generates the matrix in 
$O \left( \ell_{\Delta} \cdot \binom{\lfloor \frac{d}{p} \rfloor + n - 1}{n} \right)$ operations.
The number of matching monomials each term $\delta$ 
has is bounded above by $\binom{\lfloor \frac{d}{p} \rfloor + n - 1}{n}$ 
by Corollary \ref{cor:num:matches}, which is 
effectively constant for all reasonably computable $n$ and $p$

\subsection{Implementation}

In the implementation of \triv, \merge, and \wicsalg, there are a 
few optimizations and considerations common to all the implementations.
First, all exponent tuples are bitpacked into 
unsigned integers, improving memory efficiency 
and allowing for faster comparisons.
Note that a comparison between two unsigned integers 
is the same as a lexicographical comparison between the tuples.
Broadcasted addition and subtraction on bitpacked 
$n$-tuples reduces to addition and subtraction of 
integers, though broadcasted division and modulo still 
require $n$ separate operations.

All algorithms require the \code{indexof}$()$ function, 
which we implement using Julia's \texttt{Dict\{K,V\}}, 
which is implemented as a hashtable.

\triv ~and \wicsalg ~are easily GPU-parallelizable by creating 
a thread for each term of $\Delta$, and performing the insides of the loop in each thread.
This creates the problem of needing an \code{indexof}$()$ function, 
since Julia's \texttt{Dict\{K,V\}} is not compatible with the GPU.
The solution we implement is a static hashtable, with size 
and hashing function decided at compiletime.
Due to Julia's JIT compilation nature, creating this 
hashtable is quite slow.
Another solution can be to perform a binary search 
for the \code{indexof}$()$ function, since $B_d$ is lexicographically 
sorted, but for one-off computations, users are better 
off using non-parallelized \merge ~or \wicsalg ~to avoid 
first-time GPU kernel compilation times.
However, if many matrices need to be created, caching 
the hashtable allows the GPU implementations to far outpace 
the non-parallelized CPU implementations.

\merge ~should be parallelizable by implementing a modified parallel merge and using a parallel sort. 
However, we expect the sorting bottleneck to mean that such a parallel algorithm wouldn't beat GPU-parallel \wicsalg,
so we don't bother developing the modified parallel merge.

Below in Figure \ref{fig:momts:compare}, we compare running times 
for all the \(p\) which we consider. 
\wicsalg \, ends up being the fastest practically, 
both on the CPU and the GPU.
We did not try running the tests for \triv~ on the 
CPU for \(p=11,13\) because we do not expect 
it to finish in a reasonable
amount of time.

\begin{figure}[ht]
\label{fig:momts:compare}
\begin{center}
\begin{tabular}{c c c c c c}
    \toprule
    p & \triv ~- CPU & \triv ~- GPU & \merge   & \wicsalg ~- CPU & \wicsalg ~- GPU \\
    \midrule
    3 & 0.01   & 0.0007  & 0.001  & 0.001   & 0.0002  \\                                       
    5 & 1.6    & 0.005   & 0.046  & 0.028   & 0.0024  \\
    7 & 45.33  & 0.104   & 0.670  & 0.277   & 0.025  \\
    11 &    -  & 6.796   & 26.35  & 5.45    & 0.56  \\
    13 &    -  & 31.86   & 88.97  & 17.06   & 2.05 \\
    \bottomrule
\end{tabular}
\caption{Comparison of timings for various algorithms that calculate
the matrix of ``multiply then split''. 
Times are in seconds
and are an average of 10 different trials.}
\end{center}
\end{figure}

\section{Polynomial powering}

In this section, we describe how Algorithm \ref{alg:calc:delta1} 
is computed. Specifically, we focus on Step 2, 
\(D \gets \tilde{f}^{p}\), which is the main 
bottleneck. We first explain our implementation 
of polynomial powering using the Number Theoretic 
Transform (NTT), then explore other known polynomial 
powering algorithms and their ability to be sped up 
through mass parallelization.

\subsection{Multi-modular NTT}

\subsubsection{Kronecker substitution}
We reduce the problem of multivariate 
polynomial powering to univariate polynomial powering
by the Kronecker substitution.
Let $M$ be a non-inclusive upper bound on the degree of any variable of the polynomial
$f \in R[x_1, \dots, x_n]$. 
Then the Kronecker Substitution 
\(g(z) = f(z, z^M, z^{M^2}, \dots, z^{M^{n-1}})\)
produces a univariate polynomial $g \in R[z]$,
see \cite{arnold-2014-kronecker}.
Algebraically, this is the same as applying the 
homomorphism 
\(R[x_{1}, \ldots, x_{n}] \xrightarrow{} R[z]\)
which takes \(x_{i}\) to
\(z^{M^{i-1}}\).
This map cannot be injective in general, but it
is injective on the subset of elements of
\(f \in R[x_{1}, \ldots, x_{n}]\) 
whose terms have all degrees strictly less than \(M\).
Given two polynomials \(f_{1}, f_{2}\) that
we wish to multiply, we may choose \(M\)
big enough
so that the product lies in the subset,
so we may recover the product in 
\(R[x_{1}, \ldots, x_{n}]\) by performing
the product in the ring \(R[z]\).

\begin{rmk}
    In FLINT, exponent tuples are bitpacked into unsigned 
    integers. The process of bitpacking an array is a 
    Kronecker substitution with $M$ being a power of 2.
\end{rmk}

In the case where our polynomial is homogeneous, 
we can simply ignore one variable in all of our 
operations between monomials of the same degree. 
As above, algebraically this is applying the evaluation
homomorphism \(x_{n} \mapsto 1\).
This map is of course not injective, but it is injective on homogeneous elements.
This effectively decreases the number of variables of our polynomial by 1, 
which greatly lowers the degree of the univariate result of the
Kronecker substitution.
This improvement is essential for dense algorithms like the NTT.

\subsubsection{Polynomial powering using NTTs}
The \(k\)-th power of a univariate polynomial \(f\)
may be computed mod \(p\) by taking the NTT 
of a tuple containing the coefficients of \(f\),
raising the components to the \(k\),
and then computing the Inverse Number Theoretic Transform.
The multimodular algorithm 
computes \(f^{k}\) in the integers by
choosing primes \(p_{i}\), 
and combining the results using the Chinese remainder theorem.
Thus, our algorithm is known as the 
multi-modular NTT approach to raising polynomials to powers.

In many cases, using the Kronecker substitution to 
map a multivariate polynomial to a univariate polynomial results in 
a sparse univariate polynomial. As such, dense 
algorithms like the NTT become inefficient in both 
time and memory complexity when compared to sparse algorithms.
However, the massive throughput of GPUs allows the 
NTT to be competitive for many cases, 
quasi-\(F\)-split heights of quartic K3 surfaces being 
one of them.

\subsubsection{NTT implementation}

We use the Merge-NTT algorithm with Barrett modular 
reduction from \cite{ozcan-2023-fft} 
ported to Julia for our NTT implementation. 
So, our NTT length $L$ is always a power of $2$. 

For each NTT we perform, we search for primes $p$ that satisfy 
$p \equiv 1 \mod L$, compute primitive $L$th roots of unity 
for each $p$, and $L^{-1} \mod p$. 
These are cached, so that they can be used again for problems with the same shape 
(i.e. the degree of \(f\), the exponent \(k\), and the characteristic are the same).

Our algorithm also has fallbacks when the memory required becomes too large for the 
GPU's on-device memory.
When the problem becomes too big to fit the twiddle factors and 
inputs for all of the NTTs in device memory at once, 
we move the inputs to GPU memory and 
back to RAM for each NTT, and don't cache the twiddle factors. 
Because we are forced to move a lot of memory around for each NTT, 
and run $\log_2(L)$ modular exponentiations in each thread,
we see a large drop-off in 
performance when this NTT size threshold is reached.

\subsubsection{Prime selection}
In order for the NTT to simulate polynomial powering over $\mathbb{Z}$, 
we need to obtain an upper bound $M$ 
on the resulting coefficients, then 
choose primes $p_1 \dots p_k$ such 
that $p_1 \cdot p_2 \cdot ... \cdot p_k > M$.

The kernels and parameters from \cite{ozcan-2023-fft} are 
optimized for 64-bit integers, and when NTT sizes get big, there 
aren't enough suitable primes under $2^{32}$ to choose from.
Thus, for prime selection, 
we choose primes satisfying $p \equiv 1 \mod L$ that fit 
within 62 bits, because of the precision of Barrett Reduction.

\subsubsection{Bound finding}
The multi-modular NTT requires an upper bound $M$ in order to select primes 
to compute NTTs in. In this section, we present a quick way to compute a 
relatively tight upper bound on the resulting 
coefficients of raising a homogeneous polynomial to a power, which
allows for easier application of the NTT in polynomial powering
problems.

\begin{defn}
    Given a basis $(\mathbf{x}^{I_1}, \dots, \mathbf{x}^{I_n})$, 
    where $I_i$ are distinct degree sequences of equal 
    length, the \textit{maximal polynomial} is 
    $(m - 1)(\mathbf{x}^{I_1} + \cdots + \mathbf{x}^{I_n})$, 
    where $m$ is a non-inclusive upper bound on the coefficients.
\end{defn}

This is saying we should consider the ``worst-case'' polynomial 
for our bound-finding computations.

For small problems, we can simply plug the maximum 
polynomial $g$ into FLINT, compute $g ^ p$, and 
iterate through the resulting terms to obtain an upper bound.
We know this computation will be correct because FLINT uses GMP,
and it also must be the optimal bound on the coefficients. 
However, for larger problems, like bound finding for 
$\Delta_1$ (Algorithm \ref{alg:calc:delta1}) of
quartic K3 surfaces over $\mathbb{F}_{11}$ or $\mathbb{F}_{13}$, 
FLINT does not finish the computation in a reasonable amount of time.
If one only cares about a single shape of the problem 
(i.e. the same degree, \(k\), and characteristic),
this might be acceptable.
However, for a single computation, it completely removes 
the advantage of using the GPU, since we must perform an 
expensive CPU mutliplication to set up the algorithm.

Alternatively, we can obtain a relatively tight upper 
bound for the case of raising a homogeneous multivariate 
polynomial over $\mathbb{F}_p[x_1, \dots , x_k]$ to a power
by bounding the number of terms in the power. 

\begin{thm}
    Let $f \in \mathbb{Z}[x_1, \dots, x_n]$ be homogeneous of degree \(h\), 
    and let $m$ be a non-inclusive upper bound on the coefficients. 
    Then, the coefficients of $f ^ k$ are bounded above by 
    $\left((m - 1) \cdot \binom{h + n - 1}{n - 1}\right)^ k$
\end{thm}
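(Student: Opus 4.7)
The plan is to reduce the coefficient bound to an evaluation at a single point. The key fact we want to exploit is that for a polynomial with \emph{non-negative} coefficients, each individual coefficient is bounded above by the value of the polynomial at $(1, 1, \ldots, 1)$, since all the terms of that evaluation sum non-negatively.

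First, I would pass from $f$ to the polynomial $|f|$ obtained by replacing each coefficient $a_I$ by its absolute value $|a_I|$. By the triangle inequality applied to the convolution defining the coefficients of a product, each coefficient of $f^k$ is bounded in absolute value by the corresponding coefficient of $(|f|)^k$. Since $|f|$ has non-negative coefficients, so does $(|f|)^k$, and hence every coefficient of $(|f|)^k$ is at most $(|f|)^k(1, 1, \ldots, 1)$.

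Second, I would bound $|f|(1, \ldots, 1)$. Because $f$ is homogeneous of degree $h$ in $n$ variables, its monomial support sits inside the set of exponent tuples in $\wics(h, n)$, which by Lemma~\ref{lem:wics:size} has cardinality $\binom{h+n-1}{n-1}$. Combined with the hypothesis that each $|a_I| \leq m-1$, this gives
\[
    |f|(1, \ldots, 1) = \sum_I |a_I| \leq (m-1) \cdot \binom{h+n-1}{n-1}.
\]
Therefore $(|f|)^k(1,\ldots,1) \leq \bigl((m-1) \binom{h+n-1}{n-1}\bigr)^k$, and chaining the two bounds yields the claim.

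There is no real obstacle here; the argument is just the standard ``evaluate the majorant at $1$'' trick. The only thing worth being careful about is that the bound on the number of terms in $f$ truly is $\binom{h+n-1}{n-1}$ (not $\binom{h+n-1}{n}$ or similar), which is exactly what Lemma~\ref{lem:wics:size} provides via the stars-and-bars count of $\wics(h, n)$. One could equivalently phrase the proof in terms of the ``maximal polynomial'' $g = (m-1)\sum_I \mathbf{x}^I$ from the preceding definition: the coefficients of $g^k$ dominate those of $f^k$ termwise, and each coefficient of $g^k$ is at most $g^k(1, \ldots, 1) = \bigl((m-1)\binom{h+n-1}{n-1}\bigr)^k$.
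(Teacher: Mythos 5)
Your proof is correct, but it takes a genuinely different route from the paper's. The paper argues by induction on $k$: for a fixed exponent tuple of $f^{k+1}$, it counts the number of products in the unreduced expansion of $f^k\cdot f$ that can contribute to it (at most the number of terms of $f$, i.e.\ at most $\lvert\wics(h,n)\rvert=\binom{h+n-1}{n-1}$ by Lemma~\ref{lem:wics:size}), and multiplies the inductive coefficient bound by $(m-1)$ per contribution before summing. You instead majorize $f$ by $|f|$ (replacing coefficients by their absolute values), observe that coefficients of $f^k$ are dominated by those of $|f|^k$ via the triangle inequality on the convolution, and then use the multiplicativity of evaluation at $(1,\dots,1)$ to get $\bigl(|f|(1,\dots,1)\bigr)^k\le\bigl((m-1)\binom{h+n-1}{n-1}\bigr)^k$ in one step. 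Your argument is shorter, avoids the induction entirely, and is slightly more careful about the possibility of negative integer coefficients (the paper's counting argument tacitly assumes the relevant coefficients are non-negative, which holds in the intended application where lifts have coefficients in $\{0,\dots,p-1\}$). The paper's contribution-counting version, in exchange, makes more visible where the slack in the bound comes from --- namely, that only exponent tuples actually present in $f$ contribute --- which connects to the ``maximal polynomial'' device used just before the theorem for computing the optimal bound. Both arguments yield exactly the same final estimate.
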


\begin{proof}
    We induct on $k$. The maximum coefficient 
    of $f^0$ is $1$, which is bounded above by 
    
    \noindent$\left((m - 1) \cdot \binom{h + n - 1}{n - 1}\right)^ 0 = 1$.
    Let $(d_1, \dots , d_n)$ denote the exponent 
    tuple of a term of $f^k$, let $(d'_1, \dots , d'_n)$ 
    denote the exponent tuple of a term of 
    $f^{k + 1}$, and let $(a_1, \dots , a_n)$ 
    denote the exponent tuple of a term of $f$.

    Consider an arbitrary term of $f^{k + 1}$. 
    To find all terms in the unreduced 
    expansion of $f^k \cdot g$ that contribute 
    to that term, we look at 
    terms of $f^k$ with degree sequences of 
    the form $(d'_1 - a_1, \dots , d'_n - a_n)$. 
    The number of these terms of $f^k$ is bounded 
    above by $|wics(h, n)|$, or $\binom{h + n - 1}{n - 1}$ 
    by Lemma \ref{lem:wics:size}. Using our 
    inductive assumption, the maximum coefficient 
    of $f^k$ is bounded above by 
    $\left((m - 1) \cdot \binom{h + n - 1}{n - 1}\right)^ k$, 
    so multiplying each of these by coefficients 
    of $f$, which have a maximum value of 
    $m - 1$, and adding up $\binom{h + n - 1}{n - 1}$ 
    copies of these gives 
    $\left((m - 1) \cdot \binom{h + n - 1}{n - 1}\right)^{k + 1}$.
\end{proof}

To apply this formula to computing $\Delta_1(f^{p - 1})$ for K3 
quartic surfaces over $\mathbb{F}_p$, we plug in $n = 4$, $h = 4p$, $m = p$, 
and $k = p$ to obtain $M = \left((p - 1) \cdot \binom{4p + 3}{3}\right)^ p$
for an upper bound. To obtain the optimal upper bound, we can plug in the
maximal polynomial into a multimodular NTT with primes multiplying to over $M$,
and retrieve the maximum coefficient of that result.

\subsection{Other algorithms} \label{sec:poly:other}

Many other algorithms 
for polynomial powering
are described in \cite{monagan-2012-sparse-powering}. 
They compare performance in sparse and dense cases. 
Here, we breifly comment on how these algorithms perform on the GPU.
We refer the reader to \cite{monagan-2012-sparse-powering}
for a more precise description of all these algorithms.
In the following, assume we have some polynomial \(f\) which we wish 
to raise to the \(k\)-th power.

\code{RMUL} is analogous to the classic FOIL algorithm which is taught in schools. 
This is bottlenecked (at least on the GPU) by the ``collect like terms'' step, 
which requires a parallel sort.
While there are GPU-optimized sorting algorithms, such as the parallel merge sort
provided by CUDA.jl, it is a relatively expensive operation on the GPU.
There is a sorting algorithm proposed in \cite{gupta-2023-gpu-sort}
which claims to beat the state-of-the-art, but we were unable to reproduce the result.

\code{RSQR} uses the binary expansion of \(k\) to find the power in less
total multiplications than \code{RMUL}.
\code{BINA} and \code{BINB} use binomial expansion to more efficiently 
expand \(f^{k}\), using \code{RMUL} to merge at the end.
They both perform better in the case that the problem is sparse.
On the GPU, all of these are bottlenecked by sorting, just like
\code{RMUL}.

\code{MNE} uses multinomial coefficients to expand \(f^{k}\) 
and then combines like terms with a sort. 
Unlike the previous four algorithms, it is not bottlenecked by the sort;
instead, it is bottlenecked by the memory required to store the table of
multinomial coefficients. Do note that \code{MNE} is 
competitive for small problems.

\code{SUMS} and \code{FPS} have dense and sparse versions which are described in 
\cite{monagan-2012-sparse-powering}.
In particular, \code{FPS} is implemented in FLINT, and is called by our
code when we need powering on the CPU.
They can be parallelized, as discussed in \cite{monagan-2012-sparse-powering},
but this requires the use of locks and a heap data structure,
which are more challenging to implement on the GPU.
It would be interesting to have a GPU-accelerated version of
\code{FPS} and compare its performance with 
multimodular NTT.
The authors expect one could get a big improvement in the sparse case.

\subsection{Evaluation}

To demonstrate the power of using the GPU for mathematical computations, 
we compare our implementation
with two existing computational mathematics libraries, FLINT
and MAGMA.
We take a homogeneous polynomial of degree 16 in the integers,
whose coefficients are randomly chosen in the set 
\(\{0, \ldots, 4\}\), and raise it to successive powers \(n\),
starting at \(n=5\).
Raising such a polynomial to the 5th power is a similar 
computation to the bottleneck in the calculation of the
quasi-\(F\)-split height of a quartic surface over \(\mathbb{F}_{5}\).

\begin{figure}[h]
\begin{center}
\begin{tabular}{c c c c}
    \toprule
    \(n\) & MAGMA & FLINT & GPUPolynomials.jl \\
    \midrule
    5 & 2.10 &    0.80 & 0.001 \\
    6 & 5.82 &    1.64 & 0.003 \\
    7 & 12.03 &   2.93 & 0.005 \\
    8 & 21.64 &   5.99 & 0.010 \\
    9 & 38.78 &   12.5 & 0.011 \\
    10 & 63.22 &  20.7 & 0.012 \\
    11 & 95.48 &  29.8 & 0.038 \\
    12 & 137.61 & 40.3 & 0.043 \\
    13 & 194.73 & 52.2 & 0.074 \\
    14 & 267.57 & 70.0 & 0.079 \\
    15 & 359.22 & 90.1 & 0.087 \\
    \bottomrule
\end{tabular}
\caption{Polynomial powering times (in seconds) for various powers}
\end{center}
\end{figure}

Note that the above numbers are really the GPU beating the CPU.
The NTT is a dense algorithm, while FLINT uses a sparse algorithm.
We do not know which algorithm MAGMA uses for polynomial powering
in the integers.
In theory, such a sparse algorithm should
be much more efficient.
However, the throughput of the GPU is so much better that the
GPU crushes the CPU, even with a worse algorithm.
The authors hope that these numbers can inspire others who
rely on fundamental algorithms such as those in FLINT to 
consider re-implementing them on the GPU.

Additionally note that this does not mean the GPU will give 
improvements for every polynomial powering 
problem. Generally, the NTT is powerful when the resulting 
coefficients are small, and the polynomial is reasonably dense. 
In more sparse problems, as discussed before in section \ref{sec:poly:other}, 
the NTT is less effective, and other algorithms benefit less 
from parallelization, making them more 
suited for the CPU instead.

\section{Matrix multiplication mod \(p\)}

In the last few years, a lot of work has gone into optimizing matrix multiplication,
especially with floating-point data types
(for example, \cite{nvidia-2024-cublas}, \cite{openblas-2024-openblas}). 
Today, floating point types are faster than
integer data types.
For example, on Nvidia devices, Float32 matrix multiplication
is twice as fast as Int32 matrix multiplication, and Float64
multiplication is supported in hardware while Int64 multiplication
is not.
Moreover, the IEEE standards guarantee that integer multiplication (i.e. those that only use the mantissa) 
is guaranteed to be exact even in floating point types. 
Thus, we can freely treat floating points as an integer type of smaller size.
This trick has been utilized many times in the literature (e.g. see \cite{bglm-2024-matmul-modp}).
Here, we implement a simple version of using this trick in Julia,
which suffices for our purposes.

Let \(\ell\) be the largest possible value of an integer for our data type.
Each entry ranges from \(0\) to \(N-1\).
Thus, the maximum number of operations $o$ before our datatype overflows is one less than the value $o'$ such that 
$o' \cdot (N^2 - 2N + 1) = o^{\prime} \cdot (N-1)^{2} $ is larger than the integer limit $\ell$. Thus
\begin{align*}
    o = \left\lfloor \frac{\ell}{(N^2 - 2N + 1)} \right\rfloor - 1
\end{align*}

For Nvidia GPUs, we wish to use the Float32 type, for the primes
\(p \in \{3,5,7,11,13\}\).
Respectively, for each of these values we have
\(o = 4194302, 599185, 246722, 135299, 85597, 59073\).
Furthermore, the sizes of the matrices in question are 
respectively
 \(165, 969, 2925, 12341, 20825\).
Thus, we see that for \(p \leq 13\), we can use a fully floating-point
library like CUBLAS naively.
\footnote{
    Note that we use OpenBLAS and CUBLAS; 
    we are really using Julia wrappers
    provided by libraries such as Oscar, 
    CUDA.jl, or the Julia standard library.
    Note that MAGMA also provides
    a mod $p$ matrix multiplication implementation,
    which (according to its documentation) wraps CUBLAS for 
    for p=11 and
    p=13. 
}
To support larger primes, we implement a 
simple GPU-based fallback implementation of matrix multiplication using CUDA.jl
based on \cite{mao-2024-matmul}
which reduces mod \(p\) every 32 entries.
Our implementation is provided in GPUFiniteFieldMatrices.jl.
While it doesn't come close to CUBLAS, and is in fact slower
than a OpenBLAS when using CPU multithreading, it gives exact
computations for arbitrarily large
matrices (as long as \(32 < o\))
and is good enough that
matrix multiplication won't be a bottleneck in 
Fedder type criterion calculations.

To illustrate the performance difference between CPU and GPU, we timed
multiplying matrices of various sizes on the CPU and GPU in 
characteristic 11 (Figure 3).

\begin{figure}[h]
\begin{center}
\begin{tabular}{c c c c c}
    \toprule
    Size & CPU single-threaded & CPU multi-threaded & CUBLAS & Fallback implementation \\
    \midrule
    5,000  & 2.47   & 1.18  & 0.02 & 1.78   \\
    10,000 & 18.72  & 8.03  & 0.13 & 15.00  \\
    15,000 & 62.90  & 25.30 & 0.43 & 54.95  \\
    20,000 & 150.76 & 58.83 & 3.66 & 114.72 \\
    \bottomrule
\end{tabular}
\caption{Matrix multiplication times (in seconds) for various sizes of matrices}
\end{center}
\end{figure}

\section{Heights of K3 surfaces}
\label{sec:heights:surfaces}

\subsection{Recollections on the moduli of K3 surfaces}

% We have the following:

\begin{thm}
    [Lang-Weil, \cite{lang-weil-1954-estimate} Theorem 1]
	Let \(X \ins \mathbb{P}^{n}\)
	be a projective variety of dimension \(r\). 
	Then 
	\[
		\#X(\mathbb{F}_{p}) = p^r + O(p^{r - \frac{1}{2}})
	\] 
\end{thm}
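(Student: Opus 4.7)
The plan is to follow the classical Lang--Weil strategy, using induction on the dimension $r = \dim X$ and reducing the main estimate to Weil's Riemann hypothesis for curves (the $r=1$ case).

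First, I would reduce to the case that $X$ is geometrically irreducible. Writing $X_{\overline{\mathbb{F}_p}}$ as a union of irreducible components, the top-dimensional components are permuted by $\operatorname{Gal}(\overline{\mathbb{F}_p}/\mathbb{F}_p)$; only the Galois-stable unions of such top-dimensional components contribute a term of order $p^r$, while components of strictly smaller dimension contribute at most $O(p^{r-1})$, which is absorbed into the $O(p^{r - 1/2})$ error. The base case $r=1$ is then the classical Weil bound: for a geometrically irreducible smooth projective curve $C$ of genus $g$, one has $|\#C(\mathbb{F}_p) - (p+1)| \leq 2g\sqrt{p}$, and for singular curves the difference from the normalization's point count is bounded by a constant independent of $p$.

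For the inductive step with $r \geq 2$, I would cut $X$ by a generic linear pencil of hyperplanes to produce a rational map $\pi \colon X \dashrightarrow \mathbb{P}^1$ whose generic fiber is a geometrically irreducible variety of dimension $r-1$. Summing point counts over rational fibers and applying the inductive hypothesis to the ``good'' fibers gives
\[
(p+1)\bigl(p^{r-1} + O(p^{r - 3/2})\bigr) = p^r + p^{r-1} + O(p^{r - 1/2}) = p^r + O(p^{r - 1/2}),
\]
since $p^{r-1} = p^{-1/2}\cdot p^{r-1/2} = O(p^{r-1/2})$. The finitely many exceptional fibers (where geometric irreducibility or generic dimension fails) together with the codimension-two base locus of the pencil have dimension at most $r-1$, and so contribute at most $O(p^{r-1})$, again absorbed into the error.

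The main obstacle is the appeal to a positive-characteristic Bertini-type irreducibility theorem: one needs to know that a generic hyperplane section of a geometrically irreducible variety of dimension $\geq 2$ remains geometrically irreducible, with constants depending only on the degree of $X$ and not on $p$. In characteristic zero this is standard, but in characteristic $p$ naive Bertini can fail because generic hyperplane sections may acquire inseparability or reducibility; one must invoke a more refined irreducibility statement (going back to Zariski) to make the induction proceed uniformly in $p$. Once this irreducibility input is secured, the rest of the argument is a careful bookkeeping of the error terms as above.
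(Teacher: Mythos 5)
The paper does not prove this statement at all---it is quoted directly from Lang--Weil's 1954 paper, and your sketch is essentially the argument of that reference: induction on dimension via a pencil of hyperplane sections, with Weil's Riemann hypothesis for curves (plus a genus bound in terms of degree for uniformity) as the base case and a characteristic-\(p\) Bertini irreducibility theorem uniform in \(p\) as the key technical input, which you correctly identify as the crux. The one caveat worth noting is that the estimate genuinely requires \(X\) to be geometrically irreducible (as in Lang--Weil's actual Theorem 1, where the implied constant depends only on the ambient dimension, the dimension \(r\), and the degree of \(X\)); the paper's phrasing omits this hypothesis, and your reduction step cannot repair that, since for instance a Galois orbit of two conjugate top-dimensional components has only \(O(p^{r-1})\) rational points rather than \(p^{r}+O(p^{r-\frac{1}{2}})\).
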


Say we have an ambient projective variety
\(Y\) with chosen
hypersurface \(D\).
The Lang-Weil estimate roughly
says that if we pick a random point \(x\),
we can expect \(x\) to lie in \(D\) 
with probably about \(1 / p\).
By \cite[Section~7]{artin-1974-k3-surfaces}, 
the locus \(M_{i}\) of height \(i\) such that \(h \leq i\)
is cut out by a single section in 
\(M_{i-1}\).
If we apply both of these facts,
with \(Y\) being the 
moduli space\footnote{
	say, the course moduli space of polarized K3 surfaces,
	though one should be able to make a similar statement
	for the moduli stack
} of quartic K3 surfaces,
we can conclude 
the probability of a random point 
in the moduli space being in the locus
\(M_{2}\) is about \(1 / p\). 
Inductively, we see 

\begin{heur}
The probability of a randomly
chosen surface having height \(h\) is about \(1 / p^{h}\).
\end{heur}

Thus, we may find a surface of height \(h\) by 
randomly choosing quartic K3 surfaces, and 
if we compute a few times \(p^{h}\) 
samples, we can be confident that we'll find
one with high probability.

In practice, our methodology is to choose
random quartic polynomials,
by sampling a point in the vector space
of homogeneous polynomials, which is isomorphic to
\(\mathbb{F}_{p}^{35}\).
To obtain the moduli space \(M\) of quartic K3 surfaces, 
we must projectivize and take the quotient by the action of 
\(\text{PGL}_{4}(\mathbb{F}_{p})\) which acts by changes of variables. 
Thus, there may be two sources of deviation from the expected
probability--the group quotient and the actual 
error term in the estimate.

\subsection{Computations}

Over \(\mathbb{F}_{5}\), we 
found that the probability of finding a K3 surface
of height \(h\) was about \(1 / 5^{h}\), to three digits
of precision, for all heights \(h \leq 6\).
For higher heights we had less samples
and more variance, although
the probabilities seem to be less than expected
for higher heights.
Similarly, over \(\mathbb{F}_{7}\) we found that
the probabilities very closely matched \(1 / 7^{h}\) 
for low heights, with a more variance at higher heights.

For \(p=5\), the algorithm throughput is about
1400 surfaces per second on 
Nvidia 2080Ti GPUs provided by the 
UCSD research cluster.
Since much less time is taken for height \(1\) examples
(since the classical Fedder's criterion suffices), 
for the purposes of estimating the time to compute a
high height example we may ignore them.
Thus,
the expected compute time necessary to find a height = \(10\) 
example is about  \(5^{9} / 1400 \approx 1395\) seconds, or about 
23 compute minutes.
For \(p=7\), the throughput is about 185 surfaces
per second.
Thus, the expected compute time to compute
a height = \(10\) example is 
\(7^{9} / 185 \approx \) 218,127 seconds, or about 
60 compute hours.
The actual times to find the examples were much 
longer than this, because the authors were using a 
less-optimized NTT and had not yet discovered $\wicsalg$
and were using \(\merge\) instead.

For $p = 11$ and $p = 13$, the threshold for the NTT where 
GPU memory becomes a bottleneck is reached, and the algorithm 
faces a sharp drop in speed. The examples below took about 12 compute hours
on the aforementioned 2080Tis (via random guessing).
Newton polygons of K3 surfaces may be computed by using the
library ToricControlledReduction
\cite{chk-2019-toric-controlled-reduction} 
(which computes the zeta
function of the surface, from which the Newton polygon and thus the
height can be deduced).
Our method is comparable to ToricControlledReduction to find
the height for \(p = 11\). On an Nvidia 3090 our method took 
around 39-40 seconds per surface, while ToricControlledReduction took 
about 43 seconds per surface.
For \(p = 13\), our 
code took 100-110 seconds per surface, while ToricControlledReduction
again took about 43 seconds per surface.

% p=11,
% us: 39s,TCR: error
% us: 39s,TCR: error
% us: 39s,TCR: error
% us: 39s,TCR: error

% us: 39s, TCR: 43s

% p=13, 
% us: 113 s, TCR: 43 s
% us: 102 s, TCR: 42 s
% us: 104 s, TCR: error
% us: 105 s, TCR: error

\begin{figure}[htbp]
	\begin{center}
        \def\arraystretch{1.5}
		\begin{tabular}{p{0.1\linewidth}p{0.8\linewidth}}
			 \toprule
             \textbf{Height} & \textbf{Equation} \\
			 \midrule
			 1 & \(x_{1}^{4} + x_{2}^{4} + x_{3}^{4} + x_{4}^{4}\) \\
			  
			 2 & \(4 x_1^4 + 2 x_1^3 x_2 + x_1^3 x_4 + 4 x_1^2 x_2^2 + 2 x_1^2 x_2 x_3 + 2 x_1^2 x_3^2 + x_1^2 x_3 x_4 + 3 x_1 x_2^3 + 4 x_1 x_2^2 x_3 + 4 x_1 x_2^2 x_4 + 2 x_1 x_2 x_3 x_4 + 3 x_1 x_2 x_4^2 + 3 x_1 x_3^3 + x_1 x_3^2 x_4 + x_1 x_3 x_4^2 + x_1 x_4^3 + 4 x_2^4 + 2 x_2^3 x_3 + 4 x_2^3 x_4 + 4 x_2^2 x_3^2 + x_2^2 x_3 x_4 + 2 x_2^2 x_4^2 + 3 x_2 x_3^3 + 4 x_2 x_3^2 x_4 + 4 x_2 x_3 x_4^2 + 2 x_2 x_4^3 + 2 x_3^4 + 2 x_3^3 x_4 + 2 x_3^2 x_4^2 + x_3 x_4^3 + 4 x_4^4\) \\
			  
			 3 & \(2 x_1^4 + x_1^3 x_2 + 3 x_1^3 x_3 + x_1^3 x_4 + x_1^2 x_2 x_3 + 4 x_1^2 x_2 x_4 + x_1^2 x_3^2 + 4 x_1^2 x_3 x_4 + 3 x_1^2 x_4^2 + 4 x_1 x_2^3 + 3 x_1 x_2^2 x_3 + x_1 x_2^2 x_4 + 2 x_1 x_2 x_3^2 + 3 x_1 x_2 x_3 x_4 + x_1 x_3^3 + 4 x_1 x_3 x_4^2 + 2 x_1 x_4^3 + x_2^3 x_3 + 3 x_2^3 x_4 + 4 x_2^2 x_3^2 + 4 x_2^2 x_3 x_4 + x_2^2 x_4^2 + 2 x_2 x_3^3 + 3 x_2 x_3^2 x_4 + 4 x_2 x_3 x_4^2 + 3 x_2 x_4^3 + 4 x_3^4 + 3 x_3^3 x_4 + 2 x_3 x_4^3 + 3 x_4^4\) \\
			  
			 4 & \(4 x_1^4 + 2 x_1^3 x_3 + 4 x_1^3 x_4 + 3 x_1^2 x_2^2 + 3 x_1^2 x_2 x_3 + 4 x_1^2 x_2 x_4 + 2 x_1^2 x_3 x_4 + x_1^2 x_4^2 + 3 x_1 x_2^3 + x_1 x_2^2 x_3 + x_1 x_2^2 x_4 + x_1 x_2 x_3^2 + x_1 x_2 x_3 x_4 + x_1 x_2 x_4^2 + 2 x_1 x_3^3 + 2 x_1 x_3^2 x_4 + x_1 x_3 x_4^2 + 2 x_1 x_4^3 + 4 x_2^4 + 3 x_2^3 x_3 + x_2^3 x_4 + 3 x_2^2 x_3^2 + 3 x_2^2 x_3 x_4 + x_2^2 x_4^2 + 2 x_2 x_3^3 + 3 x_2 x_3^2 x_4 + x_2 x_3 x_4^2 + 3 x_2 x_4^3 + 3 x_3^4 + 2 x_3^3 x_4 + 4 x_3^2 x_4^2 + x_3 x_4^3\) \\
			  
			 5 & \(2x_1^4 + 2x_1^3x_2 + 4x_1^3x_3 + 3x_1^3x_4 + 2x_1^2x_2^2 + 4x_1^2x_2x_3 + x_1^2x_2x_4 + 2x_1^2x_3^2 + 3x_1^2x_3x_4 + 4x_1x_2^2x_3 + 3x_1x_2^2x_4 + x_1x_2x_3^2 + x_1x_2x_3x_4 + 2x_1x_2x_4^2 + 3x_1x_3^3 + 3x_1x_3^2x_4 + x_1x_3x_4^2 + x_2^4 + x_2^3x_3 + 2x_2^2x_3^2 + 2x_2^2x_3x_4 + 3x_2^2x_4^2 + 2x_2x_3^3 + 2x_2x_3^2x_4 + 2x_2x_3x_4^2 + 4x_3^4 + x_3^3x_4 + 2x_3^2x_4^2 + 3x_4^4\) \\
			  
			 6 & \(x_1^3x_2 + x_1^3x_3 + 3x_1^3x_4 + 3x_1^2x_2^2 + 2x_1^2x_2x_4 + 4x_1^2x_3x_4 + 4x_1^2x_4^2 + x_1x_2^3 + 3x_1x_2^2x_3 + 4x_1x_2^2x_4 + 2x_1x_2x_3^2 + 2x_1x_2x_3x_4 + 2x_1x_2x_4^2 + 2x_1x_3^3 + 3x_1x_3^2x_4 + 3x_1x_3x_4^2 + x_1x_4^3 + 4x_2^4 + x_2^3x_3 + x_2^3x_4 + x_2^2x_3^2 + 4x_2^2x_3x_4 + x_2^2x_4^2 + 3x_2x_3^3 + 2x_2x_3^2x_4 + 3x_2x_4^3 + 4x_3^4 + x_3^3x_4 + 3x_3x_4^3 + x_4^4\) \\
			  
			 7 & \(4x_1^4 + x_1^3x_3 + 3x_1^3x_4 + 4x_1^2x_2^2 + 2x_1^2x_2x_3 + 2x_1^2x_2x_4 + 2x_1^2x_3^2 + 4x_1^2x_3x_4 + 4x_1x_2^2x_3 + 2x_1x_2x_3^2 + x_1x_2x_4^2 + 2x_1x_3^3 + 4x_1x_3^2x_4 + 2x_1x_3x_4^2 + x_1x_4^3 + 4x_2^4 + 3x_2^3x_4 + 3x_2^2x_3^2 + x_2^2x_3x_4 + 2x_2^2x_4^2 + 3x_2x_3^2x_4 + 4x_2x_3x_4^2 + 3x_2x_4^3 + 3x_3^3x_4 + x_3^2x_4^2 + 4x_4^4\) \\
			  
			 8 & \(x_1^4 + 2x_1^3x_2 + 4x_1^3x_3 + x_1^2x_2^2 + 4x_1^2x_2x_3 + x_1^2x_2x_4 + x_1^2x_3x_4 + 2x_1x_2^2x_3 + 2x_1x_2^2x_4 + 2x_1x_2x_3^2 + 4x_1x_2x_3x_4 + 3x_1x_2x_4^2 + 3x_1x_3^3 + 4x_1x_3^2x_4 + 3x_1x_3x_4^2 + x_1x_4^3 + 4x_2^4 + 4x_2^3x_3 + x_2^3x_4 + 4x_2^2x_3^2 + 2x_2^2x_3x_4 + x_2^2x_4^2 + 4x_2x_3^2x_4 + x_2x_4^3 + x_3^4 + 2x_3^3x_4 + x_3^2x_4^2 + 4x_4^4\) \\
			  
			 9 & \(3 x_1^4 + 3 x_1^3 x_2 + 3 x_1^3 x_3 + x_1^2 x_2^2 + 3 x_1^2 x_2 x_3 + 3 x_1^2 x_2 x_4 + 3 x_1^2 x_3^2 + 2 x_1^2 x_3 x_4 + 2 x_1^2 x_4^2 + 4 x_1 x_2^3 + 2 x_1 x_2^2 x_3 + 4 x_1 x_2 x_3^2 + 2 x_1 x_2 x_3 x_4 + 4 x_1 x_2 x_4^2 + x_1 x_3^3 + 3 x_1 x_3^2 x_4 + 3 x_1 x_3 x_4^2 + x_1 x_4^3 + 3 x_2^3 x_3 + 4 x_2^3 x_4 + 3 x_2^2 x_3 x_4 + x_2^2 x_4^2 + 4 x_2 x_3^2 x_4 + 4 x_2 x_3 x_4^2 + 4 x_2 x_4^3 + 3 x_3 x_4^3 + 4 x_4^4\) \\
			  
			 10 & \(2 x_1^4 + 4 x_1^3 x_2 + 3 x_1^3 x_3 + x_1^3 x_4 + x_1^2 x_2^2 + 2 x_1^2 x_2 x_3 + 2 x_1^2 x_2 x_4 + 4 x_1^2 x_3^2 + 4 x_1^2 x_3 x_4 + 2 x_1^2 x_4^2 + x_1 x_2^3 + 4 x_1 x_2^2 x_4 + 3 x_1 x_2 x_3^2 + 3 x_1 x_2 x_4^2 + 2 x_1 x_3^3 + 3 x_1 x_3^2 x_4 + 2 x_1 x_3 x_4^2 + x_1 x_4^3 + 3 x_2^4 + 2 x_2^3 x_3 + 2 x_2^3 x_4 + 4 x_2^2 x_3^2 + 3 x_2^2 x_3 x_4 + 3 x_2^2 x_4^2 + x_2 x_3^3 + 2 x_2 x_3 x_4^2 + 2 x_2 x_4^3 + 4 x_3^4 + x_3^3 x_4 + 3 x_3^2 x_4^2 + 4 x_3 x_4^3 + 3 x_4^4\)\\
			 
			 \(\infty\)& \(x_{1}^{4} + x_{2}^{4} + x_{3}^{4} + x_{4}^{4} + x y z w\) \\
             \bottomrule
			  
		\end{tabular}
	\end{center}
	\caption{Quartic K3 surfaces with specified Artin--Mazur height over \(\mathbb{F}_{5}\)}
\end{figure}

\begin{figure}[htbp]
	\begin{center}
		\def\arraystretch{1.5}
		\begin{tabular}{p{0.1\linewidth}p{0.8\linewidth}}
			 \toprule
             \textbf{Height} & \textbf{Equation} \\
			 \midrule
			 
			 1 & \(5x_1^4 + 5x_1^3x_3 + 2x_1^3x_4 + 3x_1^2x_2x_3 + x_1^2x_2x_4 + 6x_1^2x_3^2 + 3x_1^2x_3x_4 +
                 3x_1^2x_4^2 + 4x_1x_2^3 + 6x_1x_2^2x_3 + 2x_1x_2^2x_4 + 4x_1x_2x_3^2 + 5x_1x_2x_3x_4 + 4x_1x_2x_4^2
                 + 5x_1x_3^3 + 4x_1x_3^2x_4 + 5x_1x_4^3 + 5x_2^4 + x_2^3x_3 + 4x_2^3x_4 + 5x_2^2x_3^2 + x_2^2x_3x_4 +
                 x_2x_3^3 + 2x_2x_3^2x_4 + 2x_2x_3x_4^2 + x_2x_4^3 + 3x_3^4 + 5x_3^3x_4 + 3x_3^2x_4^2 + x_4^4 \) \\
			 
             2 & \(3x_1^4 + 4x_1^3x_2 + x_1^3x_3 + x_1^3x_4 + x_1^2x_2^2 + 5x_1^2x_2x_3 + 5x_1^2x_2x_4 + 
             3x_1^2x_3^2 + 5x_1^2x_3x_4 + 6x_1^2x_4^2 + 2x_1x_2^3 + x_1x_2^2x_3 + 5x_1x_2^2x_4 + 2x_1x_2x_3x_4 + 
             x_1x_2x_4^2 + 2x_1x_3^3 + 3x_1x_3^2x_4 + x_1x_3x_4^2 + x_1x_4^3 + 4x_2^4 + 4x_2^3x_3 + 4x_2^3x_4 + 
             6x_2^2x_3^2 + 3x_2^2x_3x_4 + 3x_2x_3^3 + 4x_2x_3x_4^2 + 2x_3^4 + 4x_3^3x_4 + 4x_3^2x_4^2 + 2x_3x_4^3 + 6x_4^4\) \\
			 
			 3 &  \(4x_1^4 + x_1^3x_2 + 2x_1^3x_3 + 6x_1^3x_4 + 6x_1^2x_2^2 + 3x_1^2x_2x_3 +
             3x_1^2x_2x_4 + 2x_1^2x_3x_4 + 4x_1^2x_4^2 + 2x_1x_2^3 + 5x_1x_2^2x_4 + 5x_1x_2x_3^2 +
             4x_1x_2x_3x_4 + 4x_1x_2x_4^2 + 6x_1x_3^3 + x_1x_3^2x_4 + 5x_1x_3x_4^2 + 2x_1x_4^3 +
             3x_2^4 + 2x_2^3x_3 + 5x_2^2x_3^2 + 5x_2^2x_3x_4 + 3x_2^2x_4^2 + 4x_2x_3^3 +
             6x_2x_3^2x_4 + 5x_2x_3x_4^2 + 3x_2x_4^3 + 4x_3^3x_4 + 4x_3^2x_4^2 + x_3x_4^3 + 5x_4^4\)
             \\
			 
			 4 & \(2x_1^4 + 6x_1^3x_2 + 3x_1^3x_3 + x_1^3x_4 + 4x_1^2x_2^2 + 3x_1^2x_2x_3 +
             3x_1^2x_2x_4 + 2x_1^2x_3^2 + x_1^2x_3x_4 + 2x_1^2x_4^2 + 3x_1x_2^3 + 6x_1x_2^2x_4 +
             x_1x_2x_3^2 + 6x_1x_2x_3x_4 + x_1x_2x_4^2 + 4x_1x_3^3 + 2x_1x_3^2x_4 + 5x_1x_3x_4^2 +
             2x_1x_4^3 + 6x_2^4 + 3x_2^3x_3 + 5x_2^2x_3^2 + x_2^2x_3x_4 + 5x_2^2x_4^2 + 4x_2x_3^3 +
             3x_2x_3^2x_4 + x_2x_4^3 + 6x_3^4 + 2x_3^3x_4 + x_3^2x_4^2 + 3x_3x_4^3 + 2x_4^4\) \\
			 
			 5 & \(5x_1^4 + 6x_1^3x_2 + 2x_1^3x_3 + 3x_1^3x_4 + 4x_1^2x_2^2 + 3x_1^2x_2x_4 +
             2x_1^2x_3^2 + 3x_1^2x_3x_4 + 6x_1^2x_4^2 + 4x_1x_2^2x_3 + 6x_1x_2^2x_4 + 2x_1x_2x_3^2 +
             3x_1x_2x_3x_4 + 5x_1x_2x_4^2 + 3x_1x_3^3 + x_1x_3^2x_4 + 5x_1x_3x_4^2 + 6x_2^4 +
             5x_2^3x_3 + 3x_2^2x_3^2 + 6x_2^2x_3x_4 + 3x_2x_3^3 + 3x_2x_3^2x_4 + 4x_2x_3x_4^2 +
             3x_2x_4^3 + 5x_3^4 + 6x_3^2x_4^2 + 6x_3x_4^3 + 3x_4^4\) \\
			 
			 6 & \(x_1^4 + x_1^3x_2 + 4x_1^3x_3 + 6x_1^3x_4 + 6x_1^2x_2^2 + 2x_1^2x_2x_4 +
             6x_1^2x_3x_4 + 6x_1^2x_4^2 + 4x_1x_2^3 + 3x_1x_2^2x_3 + 2x_1x_2^2x_4 + 2x_1x_2x_3^2 +
             5x_1x_2x_3x_4 + 6x_1x_2x_4^2 + 6x_1x_3^2x_4 + 3x_1x_3x_4^2 + 6x_2^4 + 2x_2^3x_3 +
             3x_2^3x_4 + 5x_2^2x_3^2 + 4x_2^2x_3x_4 + 6x_2^2x_4^2 + 5x_2x_3^2x_4 + x_2x_3x_4^2 +
             3x_2x_4^3 + 2x_3^4 + 2x_3^3x_4 + 5x_3^2x_4^2 + 2x_3x_4^3 + 4x_4^4 \) \\
			 
			 7 & \(2x_1^3x_2 + 2x_1^3x_3 + 2x_1^3x_4 + x_1^2x_2^2 + 2x_1^2x_2x_3 + 3x_1^2x_2x_4 +
             5x_1^2x_3^2 + 6x_1^2x_3x_4 + x_1^2x_4^2 + 2x_1x_2^3 + 5x_1x_2^2x_3 + x_1x_2x_3^2 +
             2x_1x_2x_3x_4 + 6x_1x_2x_4^2 + 4x_1x_3^3 + 6x_1x_3^2x_4 + 5x_1x_3x_4^2 + 2x_1x_4^3 +
             2x_2^3x_3 + 3x_2^3x_4 + 4x_2^2x_3^2 + 3x_2^2x_4^2 + 3x_2x_3^3 + x_2x_3^2x_4 +
             5x_2x_3x_4^2 + 5x_2x_4^3 + 5x_3^3x_4 + x_3^2x_4^2 + 6x_3x_4^3 + 6x_4^4\) \\
			 
			 8 & \(2x_1^3x_2 + 2x_1^3x_4 + 4x_1^2x_2^2 + 6x_1^2x_2x_3 + 5x_1^2x_2x_4 + 4x_1^2x_3^2 +
             3x_1^2x_3x_4 + 3x_1^2x_4^2 + 4x_1x_2^3 + x_1x_2^2x_3 + x_1x_2^2x_4 + 4x_1x_2x_3^2 +
             5x_1x_2x_3x_4 + x_1x_2x_4^2 + 3x_1x_3^3 + x_1x_3^2x_4 + 3x_1x_3x_4^2 + x_1x_4^3 +
             5x_2^3x_3 + 5x_2^3x_4 + 6x_2^2x_3x_4 + 6x_2^2x_4^2 + 4x_2x_3^2x_4 + 3x_2x_3x_4^2 +
             2x_2x_4^3 + 6x_3^3x_4 + 6x_3^2x_4^2 + 4x_3x_4^3\) \\
			 
			 9 & \(2x_1^3x_2 + x_1^3x_3 + 6x_1^3x_4 + 6x_1^2x_2^2 + 4x_1^2x_2x_3 + 2x_1^2x_2x_4 +
             3x_1^2x_3x_4 + x_1^2x_4^2 + x_1x_2^3 + x_1x_2^2x_3 + 6x_1x_2^2x_4 + 6x_1x_2x_3^2 +
             6x_1x_2x_3x_4 + 6x_1x_2x_4^2 + 2x_1x_3^3 + 4x_1x_3x_4^2 + 6x_1x_4^3 + 6x_2^3x_3 +
             4x_2^3x_4 + 3x_2^2x_3^2 + 4x_2x_3^3 + 5x_2x_3^2x_4 + 4x_2x_3x_4^2 + 5x_2x_4^3 +
             3x_3^3x_4 + 4x_3^2x_4^2 + 2x_3x_4^3 + 3x_4^4\) \\
			 
			 10 & \( 3x_1^4 + 2x_1^3x_2 + x_1^3x_3 + x_1^3x_4 + 4x_1^2x_2x_3 + 2x_1^2x_2x_4 +
             5x_1^2x_3x_4 + 6x_1^2x_4^2 + x_1x_2^3 + 2x_1x_2^2x_4 + 5x_1x_2x_3^2 + 3x_1x_2x_3x_4 +
             4x_1x_2x_4^2 + 5x_1x_3^3 + x_1x_3^2x_4 + x_1x_3x_4^2 + x_1x_4^3 + 6x_2^4 + x_2^3x_4 +
             6x_2^2x_3^2 + x_2^2x_3x_4 + 4x_2^2x_4^2 + x_2x_3^3 + 5x_2x_4^3 + 2x_3^4 + 5x_3^3x_4 +
             5x_3^2x_4^2 + x_3x_4^3 + 6x_4^4\) \\
			 
             \(\infty\) & \( 3x_1^4 + 3x_1^3x_2 + 3x_1^3x_3 + 6x_1^2x_2^2 + 3x_1^2x_2x_4 + 2x_1^2x_3^2 + 2x_1^2x_3x_4 + 3x_1^2x_4^2 + 6x_1x_2^3 + 5x_1x_2^2x_3 + x_1x_2x_3x_4 + 5x_1x_2x_4^2 + 5x_1x_3^3 + 4x_1x_3^2x_4 + 3x_1x_3x_4^2 + 6x_1x_4^3 + x_2^4 + 4x_2^3x_4 + 3x_2^2x_3^2 + 5x_2^2x_3x_4 + 5x_2x_3^3 + x_2x_3^2x_4 + 6x_2x_3x_4^2 + x_3^3x_4 + x_3^2x_4^2 + 3x_3x_4^3 + 4x_4^4\) \\
             \bottomrule
		\end{tabular}
	\end{center}
	\caption{Quartic K3 surfaces with specified Artin--Mazur height over \(\mathbb{F}_{7}\)}
\end{figure}

\begin{figure}[htbp]
	\begin{center}
		\def\arraystretch{1.5}
		\begin{tabular}{p{0.1\linewidth}p{0.8\linewidth}}
			 \toprule
             \textbf{Height} & \textbf{Equation} \\
			 \midrule
			 1 & $4x_1^4 + 6x_1^3x_2 + x_1^3x_3 + 2x_1^3x_4 + 3x_1^2x_2^2 + x_1^2x_2x_3 + 3x_1^2x_2x_4 + 6x_1^2x_3^2 + 6x_1^2x_3x_4 + 8x_1^2x_4^2 + 7x_1x_2^3 + 2x_1x_2^2x_3 + 8x_1x_2^2x_4 + 8x_1x_2x_3x_4 + 10x_1x_2x_4^2 + 10x_1x_3^3 + 9x_1x_3^2x_4 + 6x_1x_3x_4^2 + 3x_1x_4^3 + 6x_2^4 + 7x_2^3x_3 + 4x_2^3x_4 + 10x_2^2x_3^2 + 3x_2^2x_3x_4 + 5x_2^2x_4^2 + 4x_2x_3^2x_4 + 6x_2x_4^3 + 3x_3^4 + 4x_3^3x_4 + 7x_3^2x_4^2 + 9x_3x_4^3 + 5x_4^4 $\\
			  
			 2 & $4x_1^4 + 5x_1^3x_2 + 9x_1^3x_3 + 2x_1^3x_4 + 8x_1^2x_2^2 + x_1^2x_2x_3 + 9x_1^2x_2x_4 + x_1^2x_3^2 + 8x_1^2x_3x_4 + 6x_1x_2^3 + 10x_1x_2^2x_3 + 2x_1x_2^2x_4 + 10x_1x_2x_3^2 + 9x_1x_2x_3x_4 + 6x_1x_2x_4^2 + 8x_1x_3^3 + 4x_1x_3^2x_4 + 7x_1x_3x_4^2 + 9x_1x_4^3 + 3x_2^4 + 7x_2^3x_3 + 6x_2^3x_4 + 10x_2^2x_3^2 + 8x_2^2x_3x_4 + x_2^2x_4^2 + 9x_2x_3^3 + 6x_2x_3^2x_4 + x_2x_3x_4^2 + 9x_3^4 + 10x_3^3x_4 + x_3^2x_4^2 + x_3x_4^3 + 4x_4^4$\\
			  
			 3 & $10x_1^4 + 9x_1^3x_2 + 5x_1^3x_3 + 4x_1^3x_4 + 3x_1^2x_2^2 + 9x_1^2x_2x_3 + 4x_1^2x_2x_4 + 10x_1^2x_3^2 + 4x_1^2x_3x_4 + 8x_1^2x_4^2 + 8x_1x_2^3 + 9x_1x_2^2x_3 + 3x_1x_2^2x_4 + 7x_1x_2x_3^2 + 3x_1x_2x_4^2 + 8x_1x_3^3 + 2x_1x_3^2x_4 + x_1x_3x_4^2 + 7x_1x_4^3 + 2x_2^4 + 3x_2^3x_4 + x_2^2x_3^2 + x_2^2x_3x_4 + x_2^2x_4^2 + 5x_2x_3^3 + 9x_2x_3^2x_4 + 9x_2x_3x_4^2 + 4x_2x_4^3 + 5x_3^4 + 10x_3^3x_4 + 10x_3x_4^3 + 10x_4^4$\\
             
			 4 & $2x_1^4 + 4x_1^3x_2 + 9x_1^3x_3 + 10x_1^3x_4 + 2x_1^2x_2^2 + 4x_1^2x_2x_3 + 4x_1^2x_2x_4 + 4x_1^2x_3^2 + 10x_1^2x_3x_4 + 9x_1^2x_4^2 + 5x_1x_2^3 + 5x_1x_2^2x_3 + x_1x_2^2x_4 + 8x_1x_2x_3^2 + 2x_1x_2x_3x_4 + 10x_1x_2x_4^2 + 8x_1x_3^3 + 7x_1x_3^2x_4 + 5x_1x_3x_4^2 + 4x_1x_4^3 + 3x_2^4 + 6x_2^3x_3 + 4x_2^3x_4 + 10x_2^2x_3^2 + 5x_2^2x_3x_4 + 5x_2^2x_4^2 + x_2x_3^3 + 5x_2x_4^3 + 5x_3^4 + 7x_3^2x_4^2 + 5x_3x_4^3 + 9x_4^4$\\
			 
			 5 & \(10x_1^4 + x_1^3x_2 + 6x_1^3x_3 + 3x_1^3x_4 + x_1^2x_2^2 + 9x_1^2x_2x_3 + 6x_1^2x_2x_4 + 6x_1^2x_3^2 + 8x_1^2x_3x_4 + 4x_1^2x_4^2 + 3x_1x_2^3 + 7x_1x_2^2x_3 + 3x_1x_2^2x_4 + 7x_1x_2x_3^2 + 9x_1x_2x_3x_4 + 8x_1x_2x_4^2 + 7x_1x_3^3 + x_1x_3x_4^2 + 7x_1x_4^3 + x_2^4 + 3x_2^3x_3 + 7x_2^3x_4 + 5x_2^2x_3^2 + 7x_2^2x_3x_4 + 8x_2^2x_4^2 + 8x_2x_3^3 + 5x_2x_3^2x_4 + x_2x_3x_4^2 + 9x_2x_4^3 + 7x_3^4 + 4x_3^3x_4 + 4x_3^2x_4^2 + 3x_3x_4^3\)\\
             \bottomrule
		\end{tabular}
	\end{center}
	\caption{Quartic K3 surfaces with specified Artin--Mazur height over \(\mathbb{F}_{11}\)}
\end{figure}

\begin{figure}[htbp]
	\begin{center}
		\def\arraystretch{1.5}
		\begin{tabular}{p{0.1\linewidth}p{0.8\linewidth}}
			 \toprule
             \textbf{Height} & \textbf{Equation} \\
			 \midrule
			 1 & \(6x_1^4 + 7x_1^3x_3 + 4x_1^3x_4 + 6x_1^2x_2^2 + 7x_1^2x_2x_3 + 9x_1^2x_2x_4 + 2x_1^2x_3^2 + 3x_1^2x_3x_4 + 12x_1^2x_4^2 + 8x_1x_2^3 + 4x_1x_2^2x_3 + x_1x_2^2x_4 + 9x_1x_2x_3^2 + 8x_1x_2x_3x_4 + 10x_1x_2x_4^2 + 8x_1x_3^3 + 2x_1x_3^2x_4 + 9x_1x_3x_4^2 + 4x_1x_4^3 + 5x_2^4 + 4x_2^3x_3 + 2x_2^2x_3^2 + x_2^2x_3x_4 + 2x_2^2x_4^2 + 10x_2x_3^3 + 2x_2x_3^2x_4 + 2x_2x_3x_4^2 + 5x_2x_4^3 + 4x_3^4 + 3x_3^2x_4^2 + 2x_4^4\)\\
			  
			 2 & \(12x_1^4 + 8x_1^3x_2 + 8x_1^3x_3 + 10x_1^3x_4 + 8x_1^2x_2^2 + 11x_1^2x_2x_4 + 8x_1^2x_3^2 + 12x_1^2x_3x_4 + x_1^2x_4^2 + 7x_1x_2^3 + 9x_1x_2^2x_3 + 11x_1x_2^2x_4 + 10x_1x_2x_3^2 + 7x_1x_2x_4^2 + 8x_1x_3^3 + 3x_1x_3^2x_4 + 11x_1x_3x_4^2 + x_1x_4^3 + 4x_2^4 + 7x_2^3x_3 + 4x_2^3x_4 + 8x_2^2x_3^2 + 12x_2^2x_3x_4 + 6x_2^2x_4^2 + 7x_2x_3^3 + 12x_2x_3^2x_4 + 4x_2x_3x_4^2 + 10x_2x_4^3 + 4x_3^4 + 8x_3^3x_4 + 5x_3^2x_4^2 + 4x_3x_4^3\)\\
			  
			 3 & \(8x_1^4 + 2x_1^3x_2 + 3x_1^3x_3 + x_1^3x_4 + 6x_1^2x_2^2 + 7x_1^2x_2x_3 + 5x_1^2x_2x_4 + 2x_1^2x_3^2 + x_1^2x_4^2 + 11x_1x_2^3 + 10x_1x_2^2x_3 + 3x_1x_2^2x_4 + 5x_1x_2x_3^2 + 10x_1x_2x_3x_4 + 7x_1x_2x_4^2 + 12x_1x_3^3 + 12x_1x_3^2x_4 + 5x_1x_3x_4^2 + 7x_1x_4^3 + 7x_2^4 + 6x_2^3x_3 + 3x_2^3x_4 + 10x_2^2x_3^2 + 5x_2^2x_3x_4 + 12x_2^2x_4^2 + x_2x_3^3 + 3x_2x_3^2x_4 + 12x_2x_3x_4^2 + 8x_2x_4^3 + 10x_3^4 + 7x_3^3x_4 + 4x_3^2x_4^2 + 8x_3x_4^3 + 2x_4^4\)\\
             
			 4 & \(4x_1^4 + 4x_1^3x_2 + 2x_1^3x_3 + 3x_1^3x_4 + 9x_1^2x_2^2 + 6x_1^2x_2x_3 + 7x_1^2x_2x_4 + 10x_1^2x_3^2 + x_1^2x_3x_4 + 4x_1x_2^3 + 4x_1x_2^2x_3 + 6x_1x_2^2x_4 + 12x_1x_2x_3^2 + 7x_1x_2x_3x_4 + 3x_1x_2x_4^2 + 11x_1x_3^3 + 9x_1x_3^2x_4 + 10x_1x_3x_4^2 + 11x_1x_4^3 + 3x_2^4 + 5x_2^3x_3 + 8x_2^3x_4 + 5x_2^2x_3x_4 + 5x_2^2x_4^2 + 5x_2x_3^3 + 10x_2x_3^2x_4 + 2x_2x_3x_4^2 + 10x_2x_4^3 + 4x_3^4 + 5x_3^2x_4^2 + 4x_3x_4^3 + 6x_4^4\)\\
			  
			 5 & \(11x_1^4 + 4x_1^3x_2 + 12x_1^3x_3 + 4x_1^3x_4 + 6x_1^2x_2^2 + 10x_1^2x_2x_3 + 4x_1^2x_2x_4 + x_1^2x_3^2 + 7x_1^2x_3x_4 + 4x_1^2x_4^2 + 6x_1x_2^3 + 11x_1x_2^2x_3 + 7x_1x_2^2x_4 + 8x_1x_2x_3^2 + 10x_1x_2x_4^2 + x_1x_3^3 + 9x_1x_3^2x_4 + 8x_1x_3x_4^2 + 11x_1x_4^3 + 4x_2^4 + 8x_2^3x_3 + 5x_2^2x_3x_4 + 7x_2^2x_4^2 + 8x_2x_3^3 + 6x_2x_3^2x_4 + 5x_2x_4^3 + 2x_3^4 + 10x_3^3x_4 + 8x_3^2x_4^2 + 10x_3x_4^3 + 6x_4^4\) \\
            \bottomrule
		\end{tabular}
	\end{center}
	\caption{Quartic K3 surfaces with specified Artin--Mazur height over \(\mathbb{F}_{13}\)}
\end{figure}

\clearpage

\bibliographystyle{plain}
\bibliography{main}

\end{document}